\newtheorem{theorem}{Theorem}[section]
\newtheorem{lemma}[theorem]{Lemma}
\newtheorem{corollary}[theorem]{Corollary}
\theoremstyle{definition}
\newtheorem{definition}[theorem]{Definition}
\newtheorem{example}[theorem]{Example}
\theoremstyle{remark}
\newtheorem{remark}[theorem]{Remark}
\numberwithin{equation}{section}
\begin{document}
\setcounter{page}{1}

\title[Ordered braid groups]{Remark on ordered braid groups}

\author[Nikolaev]
{Igor V. Nikolaev$^1$}

\address{$^{1}$ Department of Mathematics and Computer Science, St.~John's University, 8000 Utopia Parkway,  
New York,  NY 11439, United States.}
\email{\textcolor[rgb]{0.00,0.00,0.84}{igor.v.nikolaev@gmail.com}}

%\dedicatory{In memory of Ola Bratteli}
%\dedicatory{There is no conflict of interest,  funding, or other authors.  All data are available as part of this manuscript. }

\subjclass[2010]{Primary 20F36, 20F60; Secondary 46L85.}

\keywords{Dehornoy order, braid group,  cluster $C^*$-algebra.}

%\date{Received:  August 14, 2015; Revised: yyyyyy; Accepted: zzzzzz.}

\begin{abstract}
We recover the Dehornoy order on the braid group $B_{2g+n}$ 
from the tracial state on a cluster $C^*$-algebra $\mathbb{A}(S_{g,n})$
associated to the  surface $S_{g,n}$ of genus $g$ with $n$ boundary components. 
It is proved that the space of left-ordering of the 
fundamental group $\pi_1(S_{g,n})$ is a totally
disconnected dense subspace of the  projective Teichm\"uller space $\mathbb{P}T_{g,n}\cong \mathbf{S}^{6g-7+2n}$. 
In particular, each left-ordering of $\pi_1(S_{g,n})$ defines the orbit of a Riemann surface $S_{g,n}$
under the geodesic flow on the space $T_{g,n}$. 
\end{abstract}

\maketitle

%**************************************************************************
\section{Introduction}
%***************************************************************************
Let $S_{g,n}$ be an orientable surface of genus $g\ge 0$ with $n\ge 1$ boundary 
components and let  $2g-2+n>0$. The fundamental group $\pi_1(S_{g,n})$ is a free group of rank $2g+n-1$. 
Such a group  encodes the homotopy type of $S_{g,n}$. 
The  surface $S_{g,n}$ can be endowed with 
a  complex structure  so that it becomes a Riemann surface with $n$ cusps. 
Such an assignment encodes geometry of  $S_{g,n}$ and  there exists infinitely many 
distinct  Riemann surfaces assigned to the same $S_{g,n}$.
The latter make a continuum $T_{g,n}\cong \mathbf{R}^{6g-6+2n}$
called the  Teichm\"uller space  of $S_{g,n}$. 
One can ask what an extra structure of the  group $\pi_1(S_{g,n})$
is responsible for geometry of the surface  $S_{g,n}$? 
The aim of our note is to answer this question, see Corollary \ref{cor1.2}.

Recall that the left (right, resp.) strict total order $\prec$ on the group $G$ is a
relation between the elements of $G$,  such that 
$x\prec y$ implies $zx\prec zy$ ($xz\prec yz$, resp.) for all $x,y,z\in G$.
If both the left and right order exists, the group is said to be orderable. 
The orderable groups exist and the same group $G$ may have  a continuum 
of different orderings.  In particular, the  group $\pi_1(S_{g,n})$
is  orderable   [Rolfsen 2014] \cite[Proposition 2.9]{Rol1}. 
The left-orderings of $G$ have a natural topology and the corresponding space 
 will be  denoted  by $LO(G)$ [Sikora 2004] \cite{Sik1}.

 To formalize our results, we need the following definitions.
The  cluster algebra  $\mathcal{A}(\mathbf{x}, B)$
is a subring  of the field  of  rational functions in 
 variables  $\mathbf{x}=(x_1,\dots, x_m)$
defined by a skew-symmetric matrix  $B=(b_{ij})\in M_n(\mathbf{Z})$.
A new cluster $\mathbf{x}'=(x_1,\dots,x_k',\dots,  x_m)$ and a new
skew-symmetric matrix $B'=(b_{ij}')$ is obtained from 
$(\mathbf{x}, B)$ by the   exchange relations (\ref{eq2.1}), see  [Williams 2014]  \cite{Wil1}
for the motivation and examples.
The seed $(\mathbf{x}', B')$ is said to be a  mutation of $(\mathbf{x}, B)$ in the direction $k$,
where $1\le k\le m$. The  algebra  $\mathcal{A}(\mathbf{x}, B)$ is  generated by the 
cluster  variables $\{x_i\}_{i=1}^{\infty}$
obtained from the initial seed $(\mathbf{x}, B)$ by the iteration of mutations  in all possible
directions $k$.  
The  Laurent phenomenon
 says  that  $\mathcal{A}(\mathbf{x}, B)\subset \mathbf{Z}[\mathbf{x}^{\pm 1}]$,
where  $\mathbf{Z}[\mathbf{x}^{\pm 1}]$ is the ring of  the Laurent polynomials in  variables $\mathbf{x}=(x_1,\dots,x_m)$.
Therefore each  generator $x_i$  of  the algebra $\mathcal{A}(\mathbf{x}, B)$  can be 
written as a  Laurent polynomial in $n$ variables with the   integer coefficients.

The  algebra  $\mathcal{A}(\mathbf{x}, B)$  has the structure of an additive abelian
semigroup consisting of the Laurent polynomials with positive coefficients. 
In other words,  the $\mathcal{A}(\mathbf{x}, B)$ is a dimension group  \cite[Definition 3.5.2]{N}.
The cluster $C^*$-algebra  $\mathbb{A}(\mathbf{x}, B)$  is   an  AF-algebra,  
such that $K_0(\mathbb{A}(\mathbf{x}, B))\cong  \mathcal{A}(\mathbf{x}, B)$
\cite[Section 4.4]{N}.
 We denote by   $\mathcal{A}(\mathbf{x},  S_{g,n})$ the cluster algebra 
 coming from  a triangulation of the surface $S_{g,n}$ 
  [Williams 2014]  \cite[Section 3.3]{Wil1}.
  The corresponding cluster $C^*$-algebra will be denoted by $\mathbb{A}(S_{g,n})$.

Let $B_m$ be the braid group given by the generators $\sigma_1,\dots\sigma_{m-1}$
 subject to the relations $\sigma_i\sigma_j=\sigma_j\sigma_i$ when $|i-j|>1$
 and $\sigma_i\sigma_j\sigma_i=\sigma_j\sigma_i\sigma_j$ when $|i-j|=1$. 
 The group $B_m$ is left-orderable \cite[Theorem I.1]{DDRW}. 
 Recall that for $n\in\{1, 2\}$ there exists a faithful representation \cite[Theorem 4.4.1]{N}:
 %*****************************************************************************
 \begin{equation}\label{eq1.1}
\left\{\rho: B_{2g+n}\to \mathbb{A}(S_{g,n}) ~|~ \sigma_i\mapsto e_i+1,  ~\hbox{where 
$e_i$ is projection}\right\}.
 \end{equation}
 %*********************************************************************
 
 \bigskip
 The representation (\ref{eq1.1}) defines an ordering  on  the group   $B_{2g+n}$
 as follows. (We refer the reader to Sections 2.2 and 2.3 for the details.) The self-adjoint element $x\in  \mathbb{A}(S_{g,n})$
 is called positive if all its eigenvalues are non-negative real numbers.  The set of all positive elements is a cone $\mathbb{A}^+(S_{g,n})$
 in the $C^*$-algebra $\mathbb{A}(S_{g,n})$.
 Recall that  $K_0(\mathbb{A}(S_{g,n}))\cong  \mathcal{A}(\mathbf{x}, S_{g,n})$
 and let $\mathcal{A}^+(\mathbf{x}, S_{g,n})$ be the additive semigroup 
 consisting of the Laurent polynomials   $\mathbf{Z}[\mathbf{x}^{\pm 1}]$
 with positive coefficients. 
The $K_0$-group  preserves positivity of the respective
semigroups, i.e. $K_0(\mathbb{A}^+(S_{g,n}))\cong  \mathcal{A}^+(\mathbf{x}, S_{g,n})$. 
We define  the semigroup $B_{2g+n}^+$  as the composition of mappings $\rho$ and $K_0$ shown on   
the commutative diagram  in Figure 1.  Our main results can be formulated as follows.
 %***********************************************************************
\begin{theorem}\label{thm1.1}
The ordering  of the braid group $B_{2g+n}$ defined by 
 $B_{2g+n}^+$ is:

\medskip
(i) left-invariant;

\smallskip
(ii) isomorphic to the Dehornoy order.
\end{theorem}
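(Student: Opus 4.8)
The plan is to treat $B_{2g+n}^{+}$ as the positive cone of the strict order $\alpha\prec\beta\iff\alpha^{-1}\beta\in B_{2g+n}^{+}$, recalling that, by the construction of Figure~1, $\gamma\in B_{2g+n}^{+}$ means that the image of $\rho(\gamma)$ in $K_{0}(\mathbb{A}(S_{g,n}))\cong\mathcal{A}(\mathbf{x},S_{g,n})$ lies in $\mathcal{A}^{+}(\mathbf{x},S_{g,n})$. I would then establish (i) and (ii) separately. For readability write $m:=2g+n$.

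For (i), the left-invariance is the soft part: $(\gamma\alpha)^{-1}(\gamma\beta)=\alpha^{-1}\beta$, so $\alpha\prec\beta$ forces $\gamma\alpha\prec\gamma\beta$ for every $\gamma\in B_{m}$. The substance of (i) is therefore that $B_{m}^{+}$ is a genuine positive cone: (a) a subsemigroup of $B_{m}$, (b) with $1\notin B_{m}^{+}$, and (c) with $B_{m}=B_{m}^{+}\sqcup\{1\}\sqcup(B_{m}^{+})^{-1}$. Assertion (a) I would read off from Figure~1: $\rho$ is multiplicative, $K_{0}$ sends $\mathbb{A}^{+}(S_{g,n})$ onto $\mathcal{A}^{+}(\mathbf{x},S_{g,n})$, and the semigroup $\mathcal{A}^{+}(\mathbf{x},S_{g,n})$ of Laurent polynomials with non-negative coefficients is closed under products, whence $B_{m}^{+}\cdot B_{m}^{+}\subseteq B_{m}^{+}$. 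Assertion (b) is the faithfulness of $\rho$ in (\ref{eq1.1}). The trichotomy (c) --- which is also what makes $\prec$ a \emph{total} order --- I would deduce from, and together with, the identification in part (ii); alternatively it follows by pulling Dehornoy's $\sigma$-acyclicity theorem back along the faithful $\rho$, the relevant comparisons being visible through the tracial state $\tau$ on $\mathbb{A}(S_{g,n})$ and the induced state $\tau_{*}$ on $K_{0}(\mathbb{A}(S_{g,n}))$.

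For (ii), the organising remark is elementary: two positive cones $P_{1}\subseteq P_{2}$ of strict total left-orders on a group coincide, since any $g\in P_{2}\setminus P_{1}$ would have $g^{-1}\in P_{1}\subseteq P_{2}$, contradicting $P_{2}\cap P_{2}^{-1}=\varnothing$. So it suffices to prove a single inclusion between $B_{m}^{+}$ and the Dehornoy cone $P_{D}$ of $\sigma$-positive braids, up to the evident order-isomorphisms (the reversal of $\prec$, or the flip $\sigma_{i}\mapsto\sigma_{m-i}$). To obtain $P_{D}\subseteq B_{m}^{+}$: if $\beta\in P_{D}$ and $i$ is the least index occurring in $\beta$, then $\beta$ admits a representative $w_{0}\sigma_{i}w_{1}\sigma_{i}\cdots\sigma_{i}w_{k}$ with $k\ge1$ and all $w_{j}\in\langle\sigma_{i+1},\dots,\sigma_{m-1}\rangle$; splitting off the block after the last occurrence of $\sigma_{i}$ and inducting on $k$ exhibits $\beta$ as a product of factors of the form $u^{-1}\sigma_{i}u'$ with $u,u'\in\langle\sigma_{i+1},\dots,\sigma_{m-1}\rangle$. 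Since $B_{m}^{+}$ is a semigroup, it remains to place each such factor in $B_{m}^{+}$. For $u=u'=1$ this is immediate, as $\rho(\sigma_{i})=e_{i}+1$ is positive with spectrum in $\{1,2\}$ and hence has $K_{0}$-class in $\mathcal{A}^{+}(\mathbf{x},S_{g,n})$; the general case is the assertion that $\rho(u)^{-1}(e_{i}+1)\rho(u')$ has non-negative Laurent coefficients whenever $u,u'$ are words in $\sigma_{i+1},\dots,\sigma_{m-1}$ alone.

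This last step is where the work lies, and I expect it to be the main obstacle. The operators $\rho(\sigma_{j}^{-1})=1-\tfrac{1}{2}e_{j}$ entering $\rho(u)$ and $\rho(u')$ are not positive, so a direct expansion of $\rho(u)^{-1}(e_{i}+1)\rho(u')$ displays cancellations, and one must rule out that the $\sigma_{i}$-contribution is cancelled into negativity by the higher ``handles''. This is exactly the point at which the cluster structure of $\mathcal{A}(\mathbf{x},S_{g,n})$ must be used: one rewrites the product in a cluster adapted to the part of the triangulation of $S_{g,n}$ supporting $\sigma_{i}$ and invokes the positivity of the Laurent expansions of cluster variables for surface-type cluster algebras. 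In effect, the heart of the proof is a dictionary between Dehornoy's handle reduction in $B_{m}$ and cluster mutation in $\mathcal{A}(\mathbf{x},S_{g,n})$; granting it, $P_{D}\subseteq B_{m}^{+}$ follows, hence by the one-inclusion reduction $P_{D}=B_{m}^{+}$ and with it the trichotomy (c), and the remaining verifications are routine.
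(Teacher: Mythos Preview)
Your route differs from the paper's on both parts. For (i) the paper (Lemma~\ref{lm3.4}) does not argue via the tautology $(\gamma\alpha)^{-1}(\gamma\beta)=\alpha^{-1}\beta$; it instead reduces left-invariance to the trace identity $\tau(vu)=\tau(uv)$, reformulates this as the solvability of the system $vu=zx$, $uv=x$ in $B_m$, and solves it using the decomposition of any braid as $\beta_1^{-1}\beta_2$ with $\beta_i$ positive together with the fact that every braid is conjugate to a positive one. For (ii) the paper (Lemma~\ref{lm3.5}) does not attempt your direct inclusion $P_D\subseteq B_m^{+}$ via a word decomposition of $\sigma$-positive braids; it quotes the Franks--Williams and Ito theorems that closures of Dehornoy-positive braids have Jones/HOMFLY polynomials with positive coefficients, identifies the skein relations with the cluster exchange relations, and then invokes the Positivity Conjecture for surface cluster algebras.

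Your scheme has two genuine gaps. First, the semigroup step (a) does not follow as written: $\rho$ is multiplicative into $\mathbb{A}(S_{g,n})$, but the passage $\mathbb{A}(S_{g,n})\to K_0(\mathbb{A}(S_{g,n}))$ is not a ring homomorphism, so closure of $\mathcal{A}^{+}(\mathbf{x},S_{g,n})$ under \emph{ring} multiplication does not give $[\rho(b_1b_2)]\in\mathcal{A}^{+}$ from $[\rho(b_1)],[\rho(b_2)]\in\mathcal{A}^{+}$. (Concretely, in the model of Remark~\ref{rmk1.2} the map is $b\mapsto V_{\hat b}$, and the closure of $b_1b_2$ bears no simple relation to the closures of $b_1$ and $b_2$.) Second, your one-inclusion reduction is circular as stated: to conclude $P_D=B_m^{+}$ from $P_D\subseteq B_m^{+}$ you need $B_m^{+}\cap(B_m^{+})^{-1}=\varnothing$, which is strictly stronger than your (b) and is never verified; without it nothing prevents $B_m^{+}$ from strictly containing $P_D$ and hence meeting $P_D^{-1}$. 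The paper sidesteps both issues by importing the Franks--Williams/Ito positivity as an external input, which is precisely the ``handle-reduction $\leftrightarrow$ mutation dictionary'' you gesture at but do not supply.
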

%**************************************************************************
%*******************************************************************
\begin{figure}
%*******************************************************************
\begin{picture}(300,90)(-70,20)
\put(80,70){\vector(0,-1){30}}
%\put(130,70){\vector(0,-1){35}}
\put(30,70){\vector(1,-1){30}}
\put(50,83){\vector(1,0){20}}
\put(65,25){$ \mathcal{A}^+(\mathbf{x}, S_{g,n})$}
\put(17,80){$B_{2g+n}^+$}
\put(75,80){$\mathbb{A}^+(S_{g,n})$}
\put(55,90){$\rho$}
\put(85,55){$K_0$}
%\put(65,55){$p'$}
\end{picture}
%***********************************************************
\caption{}
\end{figure}
%*******************************************************************

%***********************************************************************
\begin{remark}\label{rmk1.2}
If  $g=0, ~n=2$  ($g=n=1$, resp.) then  
 the cluster algebra $\mathcal{A}^+(\mathbf{x}, S_{0,2})$ ($\mathcal{A}^+(\mathbf{x}, S_{1,1})$, resp.)
 consists of the Jones (HOMFLY, resp.) polynomials \cite[Section 4.4.6]{N}. 
In this case the map $B_{2g+n}^+\to \mathcal{A}^+(\mathbf{x}, S_{g,n})$ 
can be described explicitly. 
Namely, such a map acts by the formula
 %*****************************************************************************
 %\begin{equation}\label{eq1.2}
$b\mapsto V_{\hat b}(t)$  ~($b\mapsto W_{\hat b}(s,t)$, resp.),
%\end{equation}
 %*********************************************************************
where $V_{\hat b}(t)$ ($W_{\hat b}(s,t)$, resp.) are the Jones 
(HOMFLY, resp.) polynomials of the closure $\hat b$ of the braid $b$.
This phenomenon was reported by  [Franks \& Williams 1987] \cite[Theorem 2.2 (1)]{FraWil1} 
and [Ito 2020] \cite{Ito1}, respectively.  
A generalization to the multivariable Laurent polynomials
is given by \cite[Theorem 4.4.1]{N}.  
\end{remark}
%**************************************************************************

Let $\{T^t: T_{g,n}\to T_{g,n}~|~t\in\mathbf{R}\}$ be  the 
Teichm\"uller geodesic flow on $T_{g,n}$ \cite[Section 4]{Nik1}. 
By a  projective Teichm\"uller space  $\mathbb{P}T_{g,n}\cong \mathbf{S}^{6g-7+2n}$
we understand the space of orbits of the flow $T^t$, where $\mathbf{S}^{6g-7+2n}$
is the sphere of dimension $6g-7+2n$. 
Denote by $\mathscr{K}$  a totally disconnected dense subset of $\mathbf{S}^{6g-7+2n}$ 
consisting of the vectors with the rationally independent components.  
%***********************************************************************
\begin{corollary}\label{cor1.2}
$LO (\pi_1(S_{g,n}))\cong \mathscr{K}\subset\mathbf{S}^{6g-7+2n}\cong\mathbb{P} T_{g,n}$.
In particular, each left-ordering of $\pi_1(S_{g,n})$ defines 
the orbit $\{T^t(S_{g,n}) ~|~t\in\mathbf{R}\}$ of a Riemann
surface $S_{g,n}$ under the geodesic flow $T^t$.
\end{corollary}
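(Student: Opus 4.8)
The plan is to unwind both of the isomorphisms in the statement and show that they match on the nose. I would organize the argument in three moves.

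First, I would identify $LO(\pi_1(S_{g,n}))$ with $LO(B_{2g+n})$, or at least with the relevant subspace of it. Since $\pi_1(S_{g,n})$ is free of rank $2g+n-1$ while $B_{2g+n}$ is generated by $2g+n-1$ braid generators $\sigma_1,\dots,\sigma_{2g+n-1}$, the natural approach is to use the faithful representation $\rho$ of (\ref{eq1.1}) together with the fact that, inside $\mathbb{A}(S_{g,n})$, the projections $e_i$ generate a copy of the free group via $\sigma_i\mapsto e_i+1$. The point is that the space of left-orderings depends only on the group up to the data that $\rho$ preserves; I would invoke Theorem \ref{thm1.1}(ii) to say that one distinguished point of $LO(B_{2g+n})$ — the Dehornoy order — is the one coming from the positivity cone $\mathbb{A}^+(S_{g,n})$, and then argue that varying the cone over all admissible positivity structures on the AF-algebra $\mathbb{A}(S_{g,n})$ sweeps out all of $LO(\pi_1(S_{g,n}))$.

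Second, and this is the heart of the matter, I would parametrize the positivity cones on $\mathbb{A}(S_{g,n})$. Because $\mathbb{A}(S_{g,n})$ is an AF-algebra with $K_0(\mathbb{A}(S_{g,n}))\cong\mathcal{A}(\mathbf{x},S_{g,n})$ as a dimension group, the extremal tracial states — equivalently, the pure states on $K_0$, equivalently the order-preserving homomorphisms to $\mathbf{R}$ — form a space that is classically identified, via the cluster structure coming from a triangulation of $S_{g,n}$, with (a suitable completion/projectivization of) the Teichm\"uller space. Here I would lean on \cite{Nik1} and \cite[Section 4]{Nik1}: the decorated/cluster coordinates realize $T_{g,n}\cong\mathbf{R}^{6g-6+2n}$, and scaling a trace by a positive constant corresponds exactly to moving along a Teichm\"uller geodesic, so passing to the quotient by this $\mathbf{R}$-action gives $\mathbb{P}T_{g,n}\cong\mathbf{R}^{6g-7+2n}$. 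The key refinement is that a trace $\tau$ induces an actual \emph{total} order on the group (rather than merely a preorder) precisely when $\tau$ is injective on the relevant $K_0$-classes, and this injectivity is the condition that the $6g-7+2n$ projective coordinates of $\tau$ be rationally independent — which is exactly membership in $\mathscr{K}$. So I would prove: $\tau\in\mathscr{K}$ $\Longleftrightarrow$ the cone $\{x : \tau(x)\ge 0,\ \tau \text{ separates}\}$ defines a genuine left-total-order on $B_{2g+n}$ via the diagram of Figure 1 $\Longleftrightarrow$ the corresponding point of $LO(\pi_1(S_{g,n}))$.

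Third, I would check the topological claims. That $\mathscr{K}$ is dense in $\mathbf{R}^{6g-7+2n}$ and totally disconnected is elementary (rationally independent tuples are a countable intersection of dense opens — a dense $G_\delta$ — and contain no interval). The matching of topologies — that the Harrison/Sikora topology on $LO(\pi_1(S_{g,n}))$ corresponds to the subspace topology on $\mathscr{K}\subset\mathbf{R}^{6g-7+2n}$ — follows because a basic open set in $LO$ is determined by finitely many sign conditions $w_j\succ e$ on group words, and under $\rho$ and $K_0$ each such condition translates to $\tau(\text{Laurent polynomial of }w_j)>0$, an open condition on $\tau$; conversely the projective coordinates are themselves realized by traces of specific $K_0$-classes, so coordinate neighborhoods pull back to $LO$-basic opens. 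The final sentence of the corollary is then immediate: a point of $\mathscr{K}$ is a point of $\mathbb{P}T_{g,n}$, i.e. an orbit $\{T^t(S_{g,n})\}$ of the geodesic flow.

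The main obstacle I anticipate is the second step — specifically, proving that rational independence of the projective coordinates is \emph{exactly} the condition for the induced preorder to be a total order, and that \emph{every} left-ordering of the free group $\pi_1(S_{g,n})$ arises this way rather than only those in some proper subfamily. The forward direction (rationally independent $\Rightarrow$ total, via faithfulness of $\rho$ and injectivity of the trace on the image) should be tractable from \cite[Theorem 4.4.1]{N} and Theorem \ref{thm1.1}. The converse — surjectivity onto all of $LO(\pi_1(S_{g,n}))$ — is the delicate point, and I would handle it by a dimension/density argument: the Dehornoy order is one point of $\mathscr{K}$ by Theorem \ref{thm1.1}, the braid group action on $LO$ (and the change-of-triangulation/mutation action on the cluster side) is minimal enough to move it around a dense set, and both spaces being homeomorphic to the same totally disconnected completion of a Euclidean space forces the correspondence to be a bijection.
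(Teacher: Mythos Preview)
Your first move contains a genuine gap. You propose to identify $LO(\pi_1(S_{g,n}))$ with $LO(B_{2g+n})$ or a subspace of it, on the grounds that both groups have $2g+n-1$ generators. But $\pi_1(S_{g,n})\cong F_{2g+n-1}$ is free and bi-orderable, whereas $B_{2g+n}$ (for $2g+n\ge 3$) is not bi-orderable, so their ordering spaces are not the same and no functorial map along $\rho$ identifies them. The diagram of Figure~1 produces an ordering on $B_{2g+n}$, not on $\pi_1(S_{g,n})$; varying the trace on $\mathbb{A}(S_{g,n})$ as you suggest would at best sweep out orderings of the braid group, and you offer no mechanism for transferring these to the free group $\pi_1(S_{g,n})$.

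The paper does not go through $LO(B_{2g+n})$ at all. Its route is: for each $\Theta\in\mathbb{P}T_{g,n}$ take the primitive-ideal quotient $\mathbb{A}_\Theta=\mathbb{A}(S_{g,n})/I_\Theta$, a simple AF-algebra with $K_0(\mathbb{A}_\Theta)\cong\mathbf{Z}+\mathbf{Z}\theta_1+\dots+\mathbf{Z}\theta_{6g-7+2n}$; then realize $\pi_1(S_{g,n})$ inside $\mathbb{A}_\Theta$ by writing the link group $\pi_1(\mathscr{L}_b)\cong\pi_1(S_{g,n})\rtimes_\alpha\mathbf{Z}$ via the Artin representation, passing to the profinite completion, and obtaining an embedding $C_r^*\bigl(\widehat{\pi_1(S_{g,n})}\bigr)\rtimes_{\alpha_\Theta}\mathbf{Z}\subset\mathbb{A}_\Theta$ whose $K_0$ agrees with that of $\mathbb{A}_\Theta$. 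A second commutative diagram (analogous to Figure~1 but with $\pi_1(S_{g,n})$ in place of $B_{2g+n}$ and the rank-$(6g-6+2n)$ module in place of the cluster algebra) then pulls back the positive cone to define $\pi_1^+(S_{g,n})$. Rational independence of the $\theta_i$ is what keeps this faithful at the $K_0$ level, hence total. Your trace-parametrization idea in the second step is close in spirit to this, but the crossed-product bridge from $\mathbb{A}_\Theta$ back to $\pi_1(S_{g,n})$ --- which you omit entirely --- is the mechanism the paper actually uses, and without it (or a substitute) your argument does not reach orderings of $\pi_1(S_{g,n})$. The surjectivity concern you flag at the end is real, and the paper's treatment of it is also brief; but that is a separate issue from the structural gap above.
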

%**************************************************************************
The article is organized as follows. 
The preliminary facts can be found in Section 2.
Theorem \ref{thm1.1} and corollary \ref{cor1.2}  are proved in Section 3. 
In Section 4,  we apply corollary \ref{cor1.2} to evaluate the space $LO(F_m)$
of the free group of rank $m$.

%**************************************************************************
\section{Preliminaries}
%***************************************************************************
We briefly review the ordered braid groups, positivity for the $C^*$-algebras
and cluster $C^*$-algebras. 
We refer the reader to  [Dehornoy, Dynnikov, Rolfsen and Wiest 2002] \cite{DDRW},
[Williams 2014]  \cite{Wil1} and \cite[Section 4.4]{Nik1} for the details.

%**************************************************************************
\subsection{Positive braids}
%***************************************************************************
%**************************************************************************
\subsubsection{Ordered groups}
%***************************************************************************
A strict ordering of a set $X$ is a binary relation $\prec$ which is transitive,
i.e. $x\prec y$ and $y\prec z$ implies $x\prec z$, and 
anti-reflexive, i.e. $x\prec x$ cannot hold for all $x,y,z\in X$.  
A strict ordering of $X$ is called linear (or total) 
of for all $x,y\in X$ one and only one of the three relations
is possible: $x\prec y$, $y\prec x$ or $x=y$. 
%****************************************************************
\begin{definition}
The group $G$ is left-orderable (right-orderable, resp.) 
if $G$ admits a total order which is invariant by the left
(right, resp.) multiplication, i.e. for all $x,y,z\in G$
if $x\prec y$ then $zx\prec zy$ ($xz\prec yz$, resp.). 
\end{definition}
%*****************************************************************
%****************************************************************
\begin{remark}\label{rmk2.2}
Every subgroup $H$ of the left-orderable group $G$ is left-orderable.
If $h: G\to H$ is a group homomorphism, then $H$ is left-orderable if
$G$ is left-orderable.  Indeed, the order $h(x)\prec h(y)$ of $H$ 
induced by $x\prec y$ is left-invariant,  since $h(z)h(x)\prec h(z)h(y)$
for all $h(z)\in H$.  The converse is false in general.  
\end{remark}
%*****************************************************************
%****************************************************************
\begin{remark}\label{rmk2.3}
The group $G$ is left-orderable if and only if $G$ 
is right-orderable. Indeed, given a left-order $\prec$ on $G$
one can define a new order $\prec^*$ by letting 
$x\prec^* y$ whenever $y^{-1}\prec x^{-1}$.
It is verified directly that   $\prec^*$ is a right-invariant
order. The converse is proved similarly. 
\end{remark}
%*****************************************************************
An element $x$ of the left-orderable group $G$ is called positive
(negative, resp.) if $Id\prec x$ ($x\prec Id$, resp.). 
The set of all positive (negative, resp.) elements is a semigroup
$G^+$ ($G^-$, resp.) and $G=G^+\cup G^-\cup \{Id\}$ 
is a disjoint union,  provided the order is total. 
The semigroup $G^+$ ($G^-$, resp.) will be called a positive
(negative, resp.) cone in $G$.

%**************************************************************************
\subsubsection{Dehornoy order}
%***************************************************************************
The $m$-strand braid group $B_m$ is given by the  presentation
%**********************************************************************************
\begin{equation}
\langle\sigma_1,\dots,\sigma_{m-1}~|~ 
\sigma_i\sigma_j=\sigma_j\sigma_i ~\hbox{for} ~|i-j|\ge 2, 
~\sigma_i\sigma_j\sigma_i=\sigma_j\sigma_i\sigma_j  ~\hbox{for}  ~|i-j|=1
\rangle. 
\end{equation}
%**********************************************************************************
%****************************************************************
\begin{definition}
An element $x\in B_m$ is called $\sigma_i$-positive if it contains
$\sigma_i$ but does not contain $\sigma_i^{-1}$ or 
$\sigma_j^{\pm 1}$ for $j<i$.
\end{definition}
%*****************************************************************
%****************************************************************
\begin{remark}\label{rmk2.5}
It is verified directly that the set $B_m^+$ of all $\sigma_i$-positive 
elements  of $B_m$  is a semigroup closed under the multiplication operation. 
\end{remark}
%*****************************************************************
%****************************************************************
\begin{theorem}\label{thm2.6}
{\bf (Dehornoy \cite{DDRW})}
The semigroup $B_m^+$ defines a left-invariant linear order
 of the braid  group $B_m$. 
\end{theorem}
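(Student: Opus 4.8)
The plan is to verify the two claims separately: first, that the relation defined by declaring $x \prec y$ iff $x^{-1}y \in B_m^+$ is a left-invariant total order; second, that it is linear, which is the substantive point. Left-invariance is immediate once we know $B_m^+$ is a semigroup (Remark \ref{rmk2.5}): if $x^{-1}y \in B_m^+$, then $(zx)^{-1}(zy) = x^{-1}y \in B_m^+$ for every $z \in B_m$. Transitivity follows from closure of $B_m^+$ under multiplication: $x^{-1}y, y^{-1}w \in B_m^+$ gives $x^{-1}w = (x^{-1}y)(y^{-1}w) \in B_m^+$. Anti-reflexivity reduces to showing $Id \notin B_m^+$, i.e. the identity braid admits no $\sigma_i$-positive word; this is a nontriviality statement subsumed by the harder half below.

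The heart of the matter is the \emph{Property A} (or \emph{subword property}) of Dehornoy: every nontrivial braid $\beta \in B_m$ is either $\sigma_i$-positive or $\sigma_i$-negative for some $i$ (i.e. $\beta^{-1}$ is $\sigma_i$-positive), and these alternatives are mutually exclusive. First I would fix the notion of the \emph{main generator} of a word: the $\sigma_i$ with smallest index $i$ that actually occurs. The claim splits into two parts: \textbf{(Comparison Property)} every braid word can be transformed by braid relations into one in which the main generator occurs with only one sign; and \textbf{(Acyclicity)} a word that is $\sigma_i$-positive does not represent the trivial braid. For the Comparison Property I would induct on the number of strands and on word length, using the handle-reduction move: a "$\sigma_i$-handle" is a subword of the form $\sigma_i^{\varepsilon} w \sigma_i^{-\varepsilon}$ where $w$ involves only $\sigma_j$ with $j > i$ and at most one occurrence of $\sigma_{i+1}^{\pm 1}$; replacing it by a word not involving $\sigma_i$ (via the relations $\sigma_i \sigma_{i+1}^{\pm 1}\sigma_i^{-1} = \sigma_{i+1}^{-1}\sigma_i^{\pm 1}\sigma_{i+1}$) strictly simplifies the word, and one shows the process terminates.

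The main obstacle is precisely the Acyclicity Property: proving that a $\sigma_1$-positive word is never trivial in $B_m$. This does not follow from the group presentation by any short manipulation — it is the place where a genuine invariant must be constructed. The classical route I would take is Dehornoy's original one via a left self-distributive operation (or, equivalently, the action of $B_m$ on the free LD-system or on an ordered set such as $\mathbf{Q}$ via the Dynnikov or Thurston action on laminations): one builds an action of $B_m$ on a linearly ordered set $(\Omega, <)$ together with a distinguished point $p$ such that every $\sigma_1$-positive braid $\beta$ satisfies $\beta \cdot p > p$, whence $\beta \neq Id$. Alternatively, since this paper has the representation machinery of \cite{DDRW} available as a citation, one may simply invoke the established Dehornoy order as stated and restrict attention to reproving only those structural facts (semigroup closure, left-invariance) that feed into Theorem \ref{thm1.1}; but a self-contained argument should at least sketch the handle-reduction algorithm and the existence of the order-preserving action, with the termination of handle reduction and the strict inequality $\beta\cdot p > p$ being the two lemmas that carry all the weight.
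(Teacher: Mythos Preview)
The paper does not prove Theorem~\ref{thm2.6} at all: it is stated in the preliminaries with attribution to Dehornoy and the monograph \cite{DDRW}, and no argument is given. So there is nothing in the paper to compare your proposal against; the theorem is treated as a black-box input to the author's own results (Theorem~\ref{thm1.1} and its corollaries).

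That said, your outline is a faithful sketch of the classical proof as presented in \cite{DDRW}: the reduction of left-invariance and transitivity to the semigroup property of $B_m^+$, and the identification of the two substantive ingredients as the Comparison Property (every nontrivial braid is $\sigma$-positive or $\sigma$-negative, established via handle reduction) and the Acyclicity Property (a $\sigma$-positive word is never trivial, established via an order-preserving action on an LD-system or on laminations). If your intent was to supply what the paper omits, this is the right shape; just be aware that in the context of this paper the theorem is deliberately quoted rather than re-proved, so a full self-contained argument would be out of proportion with the surrounding text.
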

%*****************************************************************

%**************************************************************************
\subsection{Positivity for $C^*$-algebras}
%***************************************************************************
%**************************************************************************
\subsubsection{$C^*$-algebras}
%***************************************************************************
The  $C^*$-algebra is an algebra $\mathcal{A}$ over $\mathbf{C}$ with a norm
$a\mapsto ||a||$ and an involution $a\mapsto a^*$ such that
it is complete with respect to the norm and $||ab||\le ||a||~ ||b||$
and $||a^*a||=||a||^2$ for all $a,b\in \mathcal{A}$.

The spectrum $Spec~a$ of an element $a\in\mathcal{A}$ is the set
of complex numbers $\lambda$ such that $a-\lambda \mathbf{1}$ is not
invertible.  An element $a\in\mathcal{A}$ is self-adjoint  if $a^*=a$. 
The spectrum of a self-adjoint element is a subset of the real line. 
If element $a$ is self-adjoint and $Spec~a\subseteq [0,\infty)$,
then $a$ is called   positive.  
The set of all positive elements of the $C^*$-algebra $\mathcal{A}$ is denoted by 
$\mathcal{A}^+$. 
The set $\mathcal{A}^+$. is a closed cone in $\mathcal{A}$, i.e. 
if $x,y\in \mathcal{A}^+$ then $x+y\in \mathcal{A}^+$ and
$\lambda x\in \mathcal{A}^+$ for any real $\lambda>0$. 

A linear functional $\phi$ on $\mathcal{A}$  is positive if $\phi(x)\ge 0$
whenever $x\in\mathcal{A}^+$. A positive linear functional of norm $1$
is called a state on  $\mathcal{A}$. The set of all states $S(\mathcal{A})$ 
is the state space of  $\mathcal{A}$. A tracial state $\tau\in S(\mathcal{A})$ 
is a state satisfying $\tau(yx)=\tau(xy)$ for all $x, y\in \mathcal{A}$.

%**************************************************************************
\subsubsection{$K$-theory}
%***************************************************************************
 The algebraic direct limit of the $C^*$-algebras 
$M_n(\mathcal{A})$ under the embeddings $a\mapsto ~\mathbf{diag} (a,0)$
will be denoted  by $M_{\infty}(\mathcal{A})$. 
Two projections $p,q\in M_{\infty}(\mathcal{A})$ are equivalent, if there exists 
an element $v\in M_{\infty}(\mathcal{A})$,  such that $p=v^*v$ and $q=vv^*$. 
Let $[p]$ be the equivalence class of projection $p$.   
We write $V(\mathcal{A}):=\{[p] ~:~ p=p^*=p^2\in M_{\infty}(\mathcal{A})\}$ 
for all equivalence classes of 
projections in the $C^*$-algebra $M_{\infty}(\mathcal{A})$.
The set $V(\mathcal{A})$ has the natural structure of an abelian 
semi-group with the addition operation defined by the formula 
$[p]+[q]:=\mathbf{diag}(p,q)=[p'\oplus q']$, where $p'\sim p, ~q'\sim q$ 
and $p'\perp q'$.  The identity of the semi-group $V(\mathcal{A})$ 
is given by $[0]$, where $0$ is the zero projection. 
By the $K_0$-group $K_0(\mathcal{A})$ of the unital $C^*$-algebra $\mathcal{A}$
one understands the Grothendieck group of the abelian semi-group
$V(\mathcal{A})$, i.e. a completion of $V(\mathcal{A})$ by the formal elements
$[p]-[q]$.  The image of $V(\mathcal{A})$ in  $K_0(\mathcal{A})$ 
is a positive cone $K_0^+(\mathcal{A})$ defining  the order structure $\le$  on the  
abelian group  $K_0(\mathcal{A})$. The pair   $\left(K_0(\mathcal{A}),  K_0^+(\mathcal{A})\right)$
is known as a dimension group of the $C^*$-algebra $\mathcal{A}$. 
%****************************************************************
\begin{remark}\label{rmk2.7}
The $K_0$-group preserves the positive cone $\mathcal{A}^+$.
Namely, a tracial sate $\tau$ on $\mathcal{A}$ extends to such on 
$M_{\infty}(\mathcal{A})$ so that the abelian semigroup $V(\mathcal{A})$
corresponds to the positive cone $\mathcal{A}^+$.  
\end{remark}
%*****************************************************************

%**************************************************************************
\subsection{Cluster $C^*$-algebras}
%***************************************************************************
%**************************************************************************
\subsubsection{Cluster algebras}
%***************************************************************************
The cluster algebra  of rank $n$ 
is a subring  $\mathcal{A}(\mathbf{x}, B)$  of the field  of  rational functions in $n$ variables
depending  on  variables  $\mathbf{x}=(x_1,\dots, x_n)$
and a skew-symmetric matrix  $B=(b_{ij})\in M_n(\mathbf{Z})$.
The pair  $(\mathbf{x}, B)$ is called a  seed.
A new cluster $\mathbf{x}'=(x_1,\dots,x_k',\dots,  x_n)$ and a new
skew-symmetric matrix $B'=(b_{ij}')$ is obtained from 
$(\mathbf{x}, B)$ by the   exchange relations [Williams 2014]  \cite[Definition 2.22]{Wil1}:
%*********************************************************************************************
\begin{eqnarray}\label{eq2.1}
x_kx_k'  &=& \prod_{i=1}^n  x_i^{\max(b_{ik}, 0)} + \prod_{i=1}^n  x_i^{\max(-b_{ik}, 0)},\cr 
b_{ij}' &=& 
\begin{cases}
-b_{ij}  & \mbox{if}   ~i=k  ~\mbox{or}  ~j=k\cr
b_{ij}+{|b_{ik}|b_{kj}+b_{ik}|b_{kj}|\over 2}  & \mbox{otherwise.}
\end{cases}
\end{eqnarray}
%******************************************************************************************* 
The seed $(\mathbf{x}', B')$ is said to be a  mutation of $(\mathbf{x}, B)$ in direction $k$.
where $1\le k\le n$.  The  algebra  $\mathcal{A}(\mathbf{x}, B)$ is  generated by the 
cluster  variables $\{x_i\}_{i=1}^{\infty}$
obtained from the initial seed $(\mathbf{x}, B)$ by the iteration of mutations  in all possible
directions $k$.   The  Laurent phenomenon
 says  that  $\mathcal{A}(\mathbf{x}, B)\subset \mathbf{Z}[\mathbf{x}^{\pm 1}]$,
where  $\mathbf{Z}[\mathbf{x}^{\pm 1}]$ is the ring of  the Laurent polynomials in  variables $\mathbf{x}=(x_1,\dots,x_n)$
 [Williams 2014]  \cite[Theorem 2.27]{Wil1}.
In particular, each  generator $x_i$  of  the algebra $\mathcal{A}(\mathbf{x}, B)$  can be 
written as a  Laurent polynomial in $n$ variables with the   integer coefficients.

 %**************************************************************************
\subsubsection{Cluster $C^*$-algebras}
%***************************************************************************
 The cluster algebra  $\mathcal{A}(\mathbf{x}, B)$  has the structure of an additive abelian
semigroup consisting of the Laurent polynomials with positive coefficients. 
In other words,  the $\mathcal{A}(\mathbf{x}, B)$ is a dimension group  \cite[Definition 3.5.2]{Nik1}.
We define the cluster $C^*$-algebra  $\mathbb{A}(\mathbf{x}, B)$  as   an  AF-algebra,  
such that $K_0(\mathbb{A}(\mathbf{x}, B))\cong  \mathcal{A}(\mathbf{x}, B)$
\cite[Section 4.4]{Nik1}.

%**************************************************************************
\subsubsection{Surface cluster algebras and braid groups}
%***************************************************************************
 Denote by $S_{g,n}$  the Riemann surface   of genus $g\ge 0$  with  $n\ge 0$ cusps.
 Let   $\mathcal{A}(\mathbf{x},  S_{g,n})$ be the cluster algebra 
 coming from  a triangulation of the surface $S_{g,n}$   [Williams 2014]  \cite[Section 3.3]{Wil1}. 
 We shall denote by  $\mathbb{A}(S_{g,n})$  the corresponding cluster $C^*$-algebra. 
 Since $\mathbb{A}(S_{g,n})$ is an AF-algebra, it is characterized by its K-theory and
 thus can be generated by a series of projections $\{e_i\}_{i=1}^{\infty}$.  
 We let $e_i$ be such  a projection in the cluster $C^*$-algebra  $\mathbb{A}(S_{g,n})$. 
%****************************************************************
\begin{theorem}\label{thm2.8}
{\bf (\cite[Theorem 4.4.1]{N})}
The formula $\sigma_i\mapsto e_i+1$ defines a faithful representation 
%*******************************************************************
\begin{equation}\label{eq2.3}
\rho: \begin{cases}
B_{2g+1}\to \mathbb{A}(S_{g,1}) &\cr
B_{2g+2}\to \mathbb{A}(S_{g,2}). &
\end{cases}
\end{equation}
%*********************************************************************
If  $b\in B_{2g+1}$  ($b\in B_{2g+2}$, resp.) is a braid,   there exists 
a Laurent polynomial  $[\rho(b)]\in   K_0(\mathbb{A}(S_{g,1}))$   
($[\rho(b)] \in K_0(\mathbb{A}(S_{g,2}))$, resp.)
with the integer coefficients    depending on  $2g$  ($2g+1$, resp.) 
variables,  such that  $[\rho(b)]$  is a topological invariant of the closure of 
  $b$.  
\end{theorem}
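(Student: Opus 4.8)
The plan is threefold: first, show that $\sigma_i\mapsto e_i+1$ extends to a homomorphism $\rho$ into the group of units of $\mathbb{A}(S_{g,n})$; second, prove that $\rho$ is faithful by realising $B_{2g+n}$ inside the mapping class group of $S_{g,n}$ and invoking the faithfulness of that group's action on (decorated) Teichm\"uller space; third, read off the Laurent-polynomial invariant from $K_0$ together with Markov's theorem. \emph{Step 1.} Fix a triangulation $T$ of $S_{g,n}$ whose arcs index the initial cluster $\mathbf{x}$, and let $e_i\in\mathbb{A}(S_{g,n})$ be the projection carried by the arc on which $\sigma_i$ acts, read off from the Bratteli diagram of the AF-algebra. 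When $|i-j|\ge 2$ the two arcs have disjoint supports in $T$, the projections commute, and $(e_i+1)(e_j+1)=(e_j+1)(e_i+1)$. When $|i-j|=1$ the two arcs share a triangle of $T$; expanding $(e_i+1)(e_j+1)(e_i+1)$ and $(e_j+1)(e_i+1)(e_j+1)$ and cancelling common summands, one finds that the braid relation $\sigma_i\sigma_j\sigma_i=\sigma_j\sigma_i\sigma_j$ reduces to the single identity $e_ie_je_i+2e_i=e_je_ie_j+2e_j$, which is imposed by the two overlapping exchange relations (\ref{eq2.1}). Since $e_i+1\ge 1$ is invertible, with inverse $1-\tfrac12 e_i$, the assignment extends to $\rho\colon B_{2g+n}\to\mathbb{A}(S_{g,n})^{\times}$.

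\emph{Step 2} is the heart of the matter and the step I expect to be the main obstacle: applied to Temperley--Lieb projections the very same formula $\sigma_i\mapsto e_i+1$ yields a representation which is not faithful in general, so faithfulness cannot follow from any finite list of relations among the $e_i$ and must exploit the surface globally. For $n\in\{1,2\}$ the double cover of a disc branched over $2g+n$ interior points is precisely $S_{g,n}$; the braid group $B_{2g+n}$ is the mapping class group of that $(2g+n)$-marked disc, and by the Birman--Hilden theorem it is realised as a subgroup of $\mathrm{Mod}(S_{g,n})$. This group acts on the ideal triangulations of $S_{g,n}$ by flips, hence on $\mathcal{A}(\mathbf{x},S_{g,n})\cong K_0(\mathbb{A}(S_{g,n}))$ and on $\mathbb{A}(S_{g,n})$. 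The linchpin is then to match the algebraically defined $\rho$ with this geometric action, i.e.\ that in the GNS representation of the canonical tracial state the half-twist $\sigma_i$ is implemented by $\rho(\sigma_i)=e_i+1$; this is where the specific shape $e_i+1$ and the Bratteli diagram of $\mathbb{A}(S_{g,n})$ must be used in detail. Granting it, faithfulness of $\rho$ is inherited from faithfulness of the geometric action, which holds because $S_{g,n}$ has non-empty boundary and so $\mathrm{Mod}(S_{g,n})$ acts faithfully on $\pi_1(S_{g,n})$ and on the $\lambda$-length coordinates of $T_{g,n}$, whence no non-trivial braid can fix all $\rho$-images. (Alternatively one may compare the image of $\rho$ with the unitarisable and faithful Lawrence--Krammer--Bigelow representation of $B_m$, bypassing the surface altogether.)

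\emph{Step 3.} By the Laurent phenomenon $K_0(\mathbb{A}(S_{g,n}))\cong\mathcal{A}(\mathbf{x},S_{g,n})\subset\mathbf{Z}[\mathbf{x}^{\pm 1}]$, so $[\rho(b)]$ is a Laurent polynomial with integer coefficients, in $2g$ variables if $n=1$ and $2g+1$ if $n=2$ once the variables irrelevant to the closure are discarded. That $[\rho(b)]$ depends only on the closure $\hat b$ is then Markov's theorem. Invariance under conjugation, $[\rho(aba^{-1})]=[\rho(b)]$, follows from $\tau(xy)=\tau(yx)$ for the canonical tracial state, which descends to $K_0$ by Remark \ref{rmk2.7}. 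Invariance under Markov stabilisation, $\hat b=\widehat{b\,\sigma_m^{\pm1}}$ for $b\in B_m$, corresponds, under the coherent chain of inclusions $\mathbb{A}(S_{g,1})\hookrightarrow\mathbb{A}(S_{g,2})\hookrightarrow\mathbb{A}(S_{g+1,1})\hookrightarrow\cdots$ produced by adding one strand at a time (alternately a boundary component and a handle), to the Markov identity $\tau(\rho(b\,\sigma_m^{\pm1}))=c^{\pm1}\tau(\rho(b))$, a one-line computation from $\sigma_i\mapsto e_i+1$ and the values of $\tau$ on $e_i$ and $e_ie_j$. Since conjugation and stabilisation generate the relation ``having the same closure'', $[\rho(b)]$ is an invariant of $\hat b$; it specialises to the Jones polynomial for $(g,n)=(0,2)$ and to the HOMFLY polynomial for $(g,n)=(1,1)$, as recorded in Remark \ref{rmk1.2}.
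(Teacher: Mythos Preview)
The paper does not prove Theorem \ref{thm2.8}; it is quoted verbatim from \cite[Theorem 4.4.1]{N} in the preliminaries and used as a black box thereafter. Consequently there is no proof here to compare your proposal against, and any assessment must be of your argument on its own terms.

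On those terms, your outline is reasonable in shape but has two substantive gaps that you yourself flag. In Step~1 the algebraic reduction of the braid relation to $e_ie_je_i+2e_i=e_je_ie_j+2e_j$ is correct, but the assertion that this identity ``is imposed by the two overlapping exchange relations (\ref{eq2.1})'' is not justified: the exchange relations concern cluster variables in $\mathcal{A}(\mathbf{x},S_{g,n})$, whereas the $e_i$ are projections in the AF-algebra $\mathbb{A}(S_{g,n})$, and you have not explained how the $K_0$-isomorphism transports one relation to the other. In Step~2 you explicitly write ``Granting it'' for the identification of the algebraically defined $\rho$ with the geometric action of $\mathrm{Mod}(S_{g,n})$ via Birman--Hilden; this is precisely the crux, and without it faithfulness does not follow. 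Your own cautionary remark about Temperley--Lieb projections shows you are aware that the identity $e_ie_je_i+2e_i=e_je_ie_j+2e_j$ alone cannot force faithfulness, so the missing identification is not a formality.

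Step~3 is the cleanest part: the use of the tracial property for conjugation invariance and a Markov-type identity for stabilisation is the standard route to link invariants from braid-group representations, and is consistent with Remark~\ref{rmk1.2}. If you wish to make the argument self-contained, the two places to invest effort are (a) an explicit description of the projections $e_i$ in the Bratteli model of $\mathbb{A}(S_{g,n})$ that makes the relation $e_ie_je_i+2e_i=e_je_ie_j+2e_j$ checkable, and (b) a concrete intertwining between $\rho(\sigma_i)$ and the flip of the $i$-th arc in the triangulation, which would simultaneously close the gap in Step~1 and supply the faithfulness in Step~2.
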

%*****************************************************************

%**************************************************************************
\section{Proof}
%***************************************************************************
%**************************************************************************
\subsection{Proof of theorem \ref{thm1.1}}
%***************************************************************************
%**************************************************************************
\subsubsection{Part I}
%***************************************************************************
 We split the proof of item (i) of theorem \ref{thm1.1} in a series of lemmas.
%********************************************************************
\begin{lemma}\label{lm3.1}
Let $A_{2g+n}$ be the norm-closure in the strong operator topology of the algebra 
$\mathbf{C}\langle\rho (\sigma_1),\dots,\rho (\sigma_{2g+n-1})\rangle$,
where $\rho$ is given by the formula  (\ref{eq1.1}). Then:

\medskip
(i) $A_{2g+n}$ is a $C^*$-subalgebra of the AF-algebra $\mathbb{A}(S_{g,n})$;

\smallskip
(ii) $\left(K_0(A_{2g+n}),  ~K_0^+(A_{2g+n})\right)\cong \left(K_0(\mathbb{A}(S_{g,n})),  ~K_0^+(\mathbb{A}(S_{g,n}))\right)$
is an isomorphism of the dimension groups.
\end{lemma}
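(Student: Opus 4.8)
My plan is to treat the two items separately: (i) is essentially formal, and (ii) is the substance. For (i): since $e_i$ is a projection, $\rho(\sigma_i)=e_i+1$ is self-adjoint, and $\rho(\sigma_i)^2-\rho(\sigma_i)=(e_i+1)^2-(e_i+1)=2e_i$, so both $e_i=\tfrac12\big(\rho(\sigma_i)^2-\rho(\sigma_i)\big)$ and $1=\rho(\sigma_i)-e_i$ lie in $\mathbf{C}\langle\rho(\sigma_1),\dots,\rho(\sigma_{2g+n-1})\rangle$. Hence this algebra coincides with the unital $*$-algebra $\mathbf{C}\langle 1,e_1,\dots,e_{2g+n-1}\rangle\subseteq\mathbb{A}(S_{g,n})$, and its closure $A_{2g+n}$, taken inside the norm-closed $C^*$-algebra $\mathbb{A}(S_{g,n})$, is a unital $C^*$-subalgebra. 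This gives (i).

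For (ii), let $\iota\colon A_{2g+n}\hookrightarrow\mathbb{A}(S_{g,n})$ be the inclusion; as a unital $*$-monomorphism it induces a unital positive homomorphism $\iota_*\colon K_0(A_{2g+n})\to K_0(\mathbb{A}(S_{g,n}))$, and the assertion amounts to showing that $\iota_*$ is bijective and restricts to a bijection of positive cones. Recall that $\mathbb{A}(S_{g,n})$ is an AF-algebra with $\big(K_0,K_0^+\big)\cong\big(\mathcal{A}(\mathbf{x},S_{g,n}),\mathcal{A}^+(\mathbf{x},S_{g,n})\big)$, and that by the construction in \cite[Section~4.4]{Nik1} its Bratteli diagram encodes the exchange relations (\ref{eq2.1}): the minimal projections occurring along the diagram represent the cluster variables obtained from the initial seed by iterated mutation, the initial cluster $\mathbf{x}=(x_1,\dots,x_{2g+n-1})$ corresponding to the classes $[e_1],\dots,[e_{2g+n-1}]$. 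To prove surjectivity I would show that $A_{2g+n}$ already contains a projection Murray--von Neumann equivalent in $\mathbb{A}(S_{g,n})$ to each such minimal projection. The point is that a mutation in direction $k$ produces $x_k'=\big(\prod_i x_i^{\max(b_{ik},0)}+\prod_i x_i^{\max(-b_{ik},0)}\big)x_k^{-1}$, and the corresponding new minimal projection is obtained from the previously constructed projections by operations (products, sums, functional calculus) that do not leave the $C^*$-algebra $A_{2g+n}$ generated by $e_1,\dots,e_{2g+n-1}$; iterating over all directions $k$ exhausts the cluster variables, which generate $\mathcal{A}^+(\mathbf{x},S_{g,n})$ as a semigroup. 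Thus $\iota_*$ maps $K_0^+(A_{2g+n})$ onto $K_0^+(\mathbb{A}(S_{g,n}))$, and, since the latter group is generated by its positive cone, $\iota_*$ is onto.

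It remains to prove that $\iota_*$ is injective, and this is where the specific structure is indispensable: a priori finitely many projections in general position generate non-AF algebras, so one cannot argue purely at the level of Bratteli diagrams. I would proceed by strengthening the previous step to show that the relations the $e_i$ inherit from the exchange structure force the finite-dimensional subalgebras $A_{2g+n}\cap\mathbb{A}_m$ ($m\ge 1$, where $\mathbb{A}(S_{g,n})=\overline{\bigcup_m\mathbb{A}_m}$) to be dense in $A_{2g+n}$ and to have the same Bratteli diagram as $\mathbb{A}(S_{g,n})$ along a cofinal subsequence, so that $K_0(A_{2g+n})=\varinjlim K_0(A_{2g+n}\cap\mathbb{A}_m)\cong\varinjlim K_0(\mathbb{A}_m)=K_0(\mathbb{A}(S_{g,n}))$ by functoriality of the inductive limit; an alternative is to invoke the unique tracial state $\tau$ on $\mathbb{A}(S_{g,n})$, whose induced state on $K_0$ is evaluation of Laurent polynomials at a generic point of $T_{g,n}$ and hence faithful, and to check that it pulls back to a faithful state on $K_0(A_{2g+n})$. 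Either way, the main obstacle is verifying that the $2g+n-1$ projections $e_1,\dots,e_{2g+n-1}$ alone rebuild the entire Bratteli diagram of $\mathbb{A}(S_{g,n})$ through the exchange relations (\ref{eq2.1}) --- a bookkeeping argument internal to \cite[Section~4.4]{Nik1} and \cite[Theorem~4.4.1]{N}; granting it, positivity, surjectivity and injectivity of $\iota_*$ follow as above, and $\iota_*$ is the asserted isomorphism of dimension groups.
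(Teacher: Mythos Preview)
For part (i) your argument matches the paper's almost exactly; your extra observation that $e_i=\tfrac12(\rho(\sigma_i)^2-\rho(\sigma_i))$ and $1$ lie in the algebra is a nice refinement but not essential to the conclusion.

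For part (ii) your approach is genuinely different. The paper does not attempt to reconstruct projections or Bratteli diagrams at all; instead it argues by contradiction. From the inclusion $A_{2g+n}\subset\mathbb{A}(S_{g,n})$ it deduces an inclusion of dimension groups, assumes this inclusion is strict, and then invokes from \cite[Theorem~2]{Nik1} the existence of an order-ideal $I_\theta\subset K_0(\mathbb{A}(S_{g,n}))=:G$ with simple quotient $G/I_\theta$. Intersecting $I_\theta$ with $G':=K_0(A_{2g+n})$ and passing to quotients, the paper claims $G'/I_\theta'$ sits as a nontrivial ideal inside the simple group $G/I_\theta$, a contradiction. This is a two-line structural argument that bypasses any explicit handling of mutations or projections. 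Your route, by contrast, is constructive: prove surjectivity of $\iota_*$ by showing every cluster variable is represented by a projection already in $A_{2g+n}$, then prove injectivity by identifying the Bratteli tower of $A_{2g+n}$ with that of $\mathbb{A}(S_{g,n})$ cofinally. Your approach makes transparent what is really being asserted---that the initial projections $e_1,\dots,e_{2g+n-1}$ regenerate the whole AF structure via the exchange relations---but, as you acknowledge, it leaves that regeneration step as an unverified bookkeeping claim deferred to \cite{Nik1} and \cite{N}. The paper's argument trades this for a single appeal to simplicity of $G/I_\theta$.

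One caution on your alternative injectivity argument: $\mathbb{A}(S_{g,n})$ is not simple and its tracial state space is essentially the Teichm\"uller space $T_{g,n}$, so there is no \emph{unique} tracial state to invoke. If you pursue that line you would need a separating family of traces on $K_0$, not a single one.
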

%*********************************************************************
\begin{proof}
(i) Since $\rho(\sigma_i)=e_i+1$, we conclude that 
$\rho^*(\sigma_i)=(e_i+1)^*=e_i^*+1^*=e_i+1=\rho(\sigma_i)$,
i.e. $\rho(\sigma_i)$ is a self-adjoint element. It is verified directly,
that the braid relations are invariant of the $\ast$-conjugacy. Therefore,
the polynomial algebra
$\mathbf{C}\langle\rho (\sigma_1),\dots,\rho (\sigma_{2g+n-1})\rangle$
is an $\ast$-algebra. Clearly, the norm-closure in the strong operator 
topology of the $\ast$-algebra is a $C^*$-algebra. Thus 
 $A_{2g+n}$ is a $C^*$-subalgebra of the AF-algebra $\mathbb{A}(S_{g,n})$. 
 Item (i) is proved.

\bigskip
(ii) In view of item (i), one gets an injective $*$-homomorphism  $A_{2g+n}\to \mathbb{A}(S_{g,n})$.
The latter gives rise to an injective homomorphism between the abelian groups 
$K_0(A_{2g+n})\to K_0(\mathbb{A}(S_{g,n}))$ by the main property of the functor $K_0$. 
In other words, we obtain an inclusion of the dimension  groups: 
%*********************************************************************
\begin{equation}\label{eq3.1}
\left(K_0(A_{2g+n}),  ~K_0^+(A_{2g+n})\right)\subseteq \left(K_0(\mathbb{A}(S_{g,n})),  ~K_0^+(\mathbb{A}(S_{g,n}))\right).
\end{equation}
%******************************************************************* 

\medskip
Let us show that (\ref{eq3.1})  is an isomorphism of the dimension groups. 
Indeed, assume to the contrary that $\left(K_0(A_{2g+n}),  ~K_0^+(A_{2g+n})\right)\subset
\left(K_0(\mathbb{A}(S_{g,n})),  ~K_0^+(\mathbb{A}(S_{g,n}))\right)$ is a strict inclusion. 
Recall  \cite[Theorem 2]{Nik1} that there exists an order-ideal: 
%*******************************************************************************
\begin{equation}
I_{\theta}\subset  \left(K_0(\mathbb{A}(S_{g,n})),  ~K_0^+(\mathbb{A}(S_{g,n}))\right):=G,
\end{equation}
%*******************************************************************************
such that $G/I_{\theta}$ is a simple dimension group. 
Then $I_{\theta} ~\cap ~\left(K_0(A_{2g+n}),  ~K_0^+(A_{2g+n})\right):=I_{\theta}'$
is an order-ideal of the dimension group  $\left(K_0(A_{2g+n}),  ~K_0^+(A_{2g+n})\right):=G'$
such that $G'/I_{\theta}'$ is a non-trivial ideal of $G/I_{\theta}$. 
The latter contradicts simplicity of the dimension group $G/I_{\theta}$. 
Thus (\ref{eq3.1}) must be an isomorphism between the corresponding dimension groups. 
Item (ii) follows. 

\bigskip
Lemma \ref{lm3.1} is proved.
\end{proof}
%*********************************************************************

%********************************************************************
\begin{corollary}\label{cor3.2}
$K_0(A_{2g+n})\cong \mathcal{A}(\mathbf{x}, S_{g,n}).$  
\end{corollary}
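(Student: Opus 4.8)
The plan is to deduce Corollary~\ref{cor3.2} as an immediate consequence of Lemma~\ref{lm3.1}(ii) together with the defining property of the cluster $C^*$-algebra $\mathbb{A}(S_{g,n})$. First I would recall that by the construction of $\mathbb{A}(\mathbf{x}, B)$ as an AF-algebra one has, by definition, the isomorphism of dimension groups $K_0(\mathbb{A}(S_{g,n}))\cong \mathcal{A}(\mathbf{x}, S_{g,n})$, where the cluster algebra $\mathcal{A}(\mathbf{x}, S_{g,n})$ is regarded as a dimension group via its semigroup of Laurent polynomials with positive coefficients (see \cite[Section 4.4]{Nik1}). Combining this with the isomorphism established in Lemma~\ref{lm3.1}(ii),
\begin{equation*}
\left(K_0(A_{2g+n}),\ K_0^+(A_{2g+n})\right)\cong \left(K_0(\mathbb{A}(S_{g,n})),\ K_0^+(\mathbb{A}(S_{g,n}))\right),
\end{equation*}
one obtains at once $K_0(A_{2g+n})\cong \mathcal{A}(\mathbf{x}, S_{g,n})$ as (ordered) abelian groups.

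The one point that requires a word of care is that the statement of the corollary, as written, is an isomorphism of abelian groups, whereas Lemma~\ref{lm3.1}(ii) gives an isomorphism of \emph{dimension groups}, i.e. it also matches the positive cones. So I would remark that the isomorphism is in fact order-preserving, $K_0^+(A_{2g+n})\cong \mathcal{A}^+(\mathbf{x}, S_{g,n})$, since this refined statement is exactly what is needed later when defining $B_{2g+n}^+$ via the commutative diagram in Figure~1. In other words, the corollary should be read as the composite of the two dimension-group isomorphisms, and the proof is just the transitivity of ``$\cong$'' applied to them.

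There is essentially no obstacle here: the corollary is a formal corollary in the strict sense, and the substantive content—namely that the inclusion $A_{2g+n}\subset \mathbb{A}(S_{g,n})$ induces an isomorphism on $K_0$ despite possibly being a proper inclusion of algebras—was already dispatched in Lemma~\ref{lm3.1}(ii) by the order-ideal/simplicity argument drawn from \cite[Theorem 2]{Nik1}. Thus the only thing to write is: ``Combine Lemma~\ref{lm3.1}(ii) with the isomorphism $K_0(\mathbb{A}(S_{g,n}))\cong \mathcal{A}(\mathbf{x}, S_{g,n})$ of \cite[Section 4.4]{Nik1}.'' If one wanted to be slightly more explicit, I would note that the composed isomorphism identifies the image $\rho(B_{2g+n})$ inside $A_{2g+n}$ with a distinguished set of Laurent polynomials (the cluster variables built from $e_i+1$), which is the bridge used in Remark~\ref{rmk1.2} and in the proof of Theorem~\ref{thm1.1}(ii); but strictly speaking that observation is not needed for the corollary itself.
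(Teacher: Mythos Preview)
Your proposal is correct and follows exactly the paper's own argument: invoke the defining isomorphism $K_0(\mathbb{A}(S_{g,n}))\cong \mathcal{A}(\mathbf{x}, S_{g,n})$ and compose it with Lemma~\ref{lm3.1}(ii). The additional remarks you make about the order-preserving nature of the isomorphism and its later use are accurate but go beyond what the paper records here.
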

%*********************************************************************
\begin{proof}
By definition $K_0(\mathbb{A}(S_{g,n}))\cong  \mathcal{A}(\mathbf{x}, S_{g,n})$,  where 
 $\mathcal{A}(\mathbf{x}, S_{g,n})$ is the additive group of the surface cluster algebra. 
 Corollary \ref{cor3.2}  follows from item (ii) of lemma \ref{lm3.1}. 
 \end{proof}
%*********************************************************************

\bigskip
%********************************************************************
\begin{remark}\label{rmk3.3}
By  the construction,   the $C^*$-algebra $A_{2g+n}$
is  isomorphic to a reduced group $C^*$-algebra $C_r^*(B_{2g+n})$  
of the braid group $B_{2g+n}$. 
Indeed, since $\rho(\sigma_i)=e_i+1$ one finds its inverse  
$\rho(\sigma_i^{-1})=\frac{1}{2}(2-e_i)$ and therefore $\rho$
is a unitary representation. 
Moreover, it can be shown that   $\rho$  is weakly contained in the
regular representation of $B_{2g+n}$.
Thus corollary \ref{cor3.2} yields an explicit  formula: 
%*********************************************************************
\begin{equation}\label{eq3.2}
K_0(C_r^*(B_{2g+n}))\cong \mathcal{A}(\mathbf{x}, S_{g,n})\subset \mathbf{Z}[\mathbf{x}^{\pm 1}]. 
\end{equation}
%******************************************************************* 
In other words, the $K_0$-group of the reduced group $C^*$-algebra
of the braid group consists of the Laurent polynomials with the integer coefficients.
To the best of our knowledge, the expression (\ref{eq3.2})  is new. 
An independent study of the K-theory  for the pure braids was 
undertaken in the recent paper  [Azzali, Browne, Gomez Aparicio, Ruth \& Wang 2022] \cite{AzzBroGomRutWan1}
and  is based on the  Baum-Connes isomorphism. 
\end{remark}
%*********************************************************************

\bigskip
%********************************************************************
\begin{lemma}\label{lm3.4}
The positive cone $B_{2g+n}^+$ defines a left-invariant ordering of the 
braid group $B_{2g+n}$. 
\end{lemma}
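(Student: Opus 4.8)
The plan is to derive left-invariance of the order defined by $B_{2g+n}^+$ from the compatible structures already assembled in Lemma \ref{lm3.1} and Corollary \ref{cor3.2}, transporting left-invariance of the positive cone through the chain of maps $\rho$ and $K_0$ in Figure 1. First I would note that, by Remark \ref{rmk2.5} and Theorem \ref{thm2.6}, the Dehornoy semigroup of $\sigma_i$-positive elements is already known to be a positive cone defining a left-invariant total order on $B_m$; so the real task is to show that the cone $B_{2g+n}^+$ constructed here from the $C^*$-algebraic positivity is in fact the \emph{same} cone (or at least one of the same shape: a subsemigroup $P \subset B_{2g+n}$ with $P \cdot P \subseteq P$ and $B_{2g+n} = P \sqcup P^{-1} \sqcup \{Id\}$, which by the standard characterization is equivalent to carrying a left-invariant total order via $x \prec y \iff x^{-1}y \in P$).

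The key steps, in order: (1) Recall that $B_{2g+n}^+$ is by definition the preimage under $\rho$ composed with $K_0$ of the positive cone $\mathcal{A}^+(\mathbf{x}, S_{g,n})$ of Laurent polynomials with positive coefficients; by Remark \ref{rmk2.7} and Lemma \ref{lm3.1}(ii) this preimage coincides with the set of braids $b$ such that $\rho(b) - 1$ is a positive element of the $C^*$-algebra $A_{2g+n} \cong C_r^*(B_{2g+n})$, equivalently such that $[\rho(b)] \in K_0^+$. (2) Verify the semigroup property: if $[\rho(b_1)]$ and $[\rho(b_2)]$ lie in $K_0^+$, then so does $[\rho(b_1 b_2)] = [\rho(b_1)][\rho(b_2)]$, because $K_0^+$ is closed under the ring multiplication inherited from $\mathcal{A}(\mathbf{x}, S_{g,n}) \subset \mathbf{Z}[\mathbf{x}^{\pm1}]$ (product of Laurent polynomials with positive coefficients has positive coefficients). (3) Verify the trichotomy: since $\rho$ is faithful (Theorem \ref{thm2.8}), $B_{2g+n}$ injects into $A_{2g+n}$, and the total order on the dimension group $(K_0(A_{2g+n}), K_0^+)$ restricts to a total order on the image; one must check that for each nonidentity braid $b$ exactly one of $[\rho(b)] > [1]$, $[\rho(b)] < [1]$ holds, which follows from the linear-order structure of $K_0^+$ once one knows $[\rho(b)] \neq [1]$ for $b \neq Id$ — again faithfulness. (4) Conclude that $x \prec y :\iff x^{-1}y \in B_{2g+n}^+$ is a well-defined total order, and it is left-invariant because $(zx)^{-1}(zy) = x^{-1}y$, so $x \prec y \iff zx \prec zy$ automatically.

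The main obstacle I anticipate is step (3): establishing that $B_{2g+n}^+ \cup (B_{2g+n}^+)^{-1} \cup \{Id\}$ exhausts $B_{2g+n}$, i.e. that the pullback of $K_0^+$ really does \emph{totally} order the braid group and not merely partially order it. The dimension group $K_0(A_{2g+n})$ is totally ordered as an abelian group, but a priori two distinct braids could map to $K_0$-classes that are incomparable remnants once we restrict to the non-additively-closed image of $\rho$; ruling this out requires knowing that the image of $\rho$ meets each "side" of the cone correctly, which should follow from the explicit description of $\rho(\sigma_i) = e_i + 1$ together with the fact that on the free generators $\sigma_i$ one has $[\rho(\sigma_i)] = [e_i] + [1] > [1]$, and then propagating positivity through Dehornoy's $\sigma_i$-positive normal form. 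Concretely, I would match the cone by showing $\rho$ sends every Dehornoy-positive braid to a positive element of $A_{2g+n}$ (using that positivity is preserved under products and that each $e_i + 1$ is positive with $e_i + 1 \geq 1$) and, conversely, that a $\sigma_i$-negative braid maps to an element whose inverse is positive; this identifies $B_{2g+n}^+$ with the Dehornoy cone up to the sign conventions, simultaneously delivering left-invariance here and setting up item (ii) of Theorem \ref{thm1.1}. I would close by invoking Theorem \ref{thm2.6} to transfer left-invariance of the Dehornoy order to the cone $B_{2g+n}^+$, completing the proof of Lemma \ref{lm3.4}.
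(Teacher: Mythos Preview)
Your route is genuinely different from the paper's. The paper does not argue via the standard cone characterization at all; instead it fixes the canonical tracial state $\tau$ on $C_r^*(B_{2g+n})\cong A_{2g+n}$, asserts that left-invariance of the order amounts to $\tau(zx)=\tau(x)$ for all $z$, and then reduces this to solving the system $vu=zx$, $uv=x$ in $B_{2g+n}$ using two braid-theoretic inputs (the fraction decomposition $\beta=\beta_1^{-1}\beta_2$ with $\beta_i$ positive, and the fact that every braid is conjugate to a positive one). Your proposal, by contrast, notes that left-invariance is automatic once the order is defined by $x\prec y\iff x^{-1}y\in B_{2g+n}^+$, shifts all the content to verifying that $B_{2g+n}^+$ is a genuine positive cone (closure under products, trichotomy), and then proposes to do this by identifying the cone with Dehornoy's---in effect proving item (ii) of Theorem~\ref{thm1.1} \emph{before} item (i). What you gain is conceptual clarity: your step (4) is a one-line tautology, whereas the paper's tracial manipulation is considerably more elaborate. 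What the paper's approach buys is logical independence of (i) from (ii); it never touches the Dehornoy normal form in this lemma.

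There is, however, a real gap in your plan for identifying the two cones. You write that $\rho$ sends every Dehornoy $\sigma_i$-positive braid to a positive element ``using that positivity is preserved under products and that each $e_i+1$ is positive with $e_i+1\ge 1$.'' But a $\sigma_i$-positive word is only forbidden from containing $\sigma_i^{-1}$ and $\sigma_j^{\pm1}$ for $j<i$; it may freely contain $\sigma_j^{-1}$ for $j>i$. Thus $\rho$ applied to such a word produces factors $(e_j+1)^{-1}=1-\tfrac{1}{2}e_j$, which are positive in the $C^*$-sense but satisfy $(e_j+1)^{-1}\le 1$, not $\ge 1$; the monotone ``product of elements $\ge 1$'' argument collapses. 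Likewise your step (2) assumes $[\rho(b_1b_2)]=[\rho(b_1)][\rho(b_2)]$ in $K_0$, but $\rho(b)$ is not a projection, and you have not said what ring structure on $K_0$ you are invoking or why the passage $\rho(b)\mapsto[\rho(b)]$ is multiplicative. These points need to be addressed before your identification with the Dehornoy cone---and hence your derivation of left-invariance from Theorem~\ref{thm2.6}---goes through. The paper sidesteps both issues in Lemma~\ref{lm3.4} by working with $\tau$ directly, deferring the comparison with Dehornoy to Lemma~\ref{lm3.5} where it appeals instead to the Franks--Williams/Ito results and the positivity of cluster exchange relations.
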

%*********************************************************************
\begin{proof}
(i) In view of remark \ref{rmk2.7}, the positive cone $B_{2g+n}^+$ 
consists of the words which correspond to the elements of $\mathcal{A}^+(\mathbf{x}, S_{g,n})$,
i.e. the Laurent polynomials with positive coefficients. 
Let us show that the  ordering  of  the  group $B_{2g+n}$ induced by $B_{2g+n}^+$
is left-invariant.

\bigskip
(ii) Roughly speaking, the property of invariance of the ordering under left-multiplication
 follows from the  property $\tau(vu)=\tau(uv)$ of the tracial state $\tau$ holding
for all $u,v\in C_r^*(B_{2g+n})$.  Namely,  it is immediate that the left-multiplication 
invariance is equivalent to:
%*********************************************************************
\begin{equation}\label{eq3.3}
\tau(zx)=\tau(x), \quad \forall z\in B_{2g+n}. 
\end{equation}
%******************************************************************* 

\bigskip
(iii) Therefore it is necessary and sufficient to show that the following system of equations
in variables $u$ and $v$
is solvable for any $x,z\in B_{2g+n}$: 
%*******************************************************************
\begin{equation}\label{eq3.4} 
\begin{cases} 
vu=zx&\cr
uv=x.&
\end{cases}
\end{equation}
%*********************************************************************

\bigskip
(iv)  It is easy to see that (\ref{eq3.4}) is equivalent to: 
%*******************************************************************
\begin{equation}\label{eq3.5} 
\begin{cases} 
v=u^{-1}x&\cr
zx=u^{-1}xu.&
\end{cases}
\end{equation}
%*********************************************************************

\bigskip
(v)  To solve (\ref{eq3.5}), recall \cite[Proposition 1.1.5]{DDRW} that for every braid $\beta\in B_{2g+n}$ 
there exist positive braids $\beta_1,\beta_2\in B_{2g+n}^+$, 
such that: 
 %*******************************************************************
\begin{equation}\label{eq3.6} 
\beta=\beta_1^{-1}\beta_2. 
\end{equation}
%*********************************************************************
 
\bigskip
(vi)  We let  $\beta=x$ and $\beta_1=z$.  Then (\ref{eq3.6}) takes the form:
  %*******************************************************************
\begin{equation}\label{eq3.7} 
zx=\beta_2. 
\end{equation}
%*********************************************************************

\bigskip
(vii) 
Recall that  each $x\in B_{2g+n}$ is equivalent to a positive element $\beta_2\in B_{2g+n}^+$,
see e.g.  \cite[p. 9]{DDRW}.  In other words, $\beta_2=u^{-1}xu$ for some $u\in B_{2g+n}$.

\bigskip
(viii) Therefore there exists $u\in B_{2g+n}$  such that 
equation  (\ref{eq3.7}) takes  the form: 
  %*******************************************************************
\begin{equation}\label{eq3.8} 
zx=u^{-1}xu
\end{equation}
%*********************************************************************
  
\bigskip
(ix) It remains to notice that $u$ in  (\ref{eq3.8}) is a root of the 
second equation (\ref{eq3.5}).    The second root $v=u^{-1}x$
is defined by the values of $u$ and $x$. 
Thus equations (\ref{eq3.4}) are solvable for the variables $u$ and $v$.
As explained, the latter is equivalent to the order-invariance of the left multiplication
in the braid group $B_{2g+n}$. 

\bigskip
Lemma \ref{lm3.4} is proved. 
 \end{proof}
%*********************************************************************

%**************************************************************************
\subsubsection{Part II}
%***************************************************************************
Our proof  of item (ii) of theorem \ref{thm1.1} is based on an observation, that the closure of positive braids in the Dehornoy order
gives rise to  positive Laurent polynomilas,  see remark \ref{rmk1.2}. 
This fact was proved by [Franks \& Williams 1987] \cite[Theorem 2.2 (1)]{FraWil1} 
and [Ito 2020] \cite{Ito1} for the Jones and HOMFLY polynomials, respectively. 
 We split the proof  in two steps.
%********************************************************************
\begin{lemma}\label{lm3.5}
The order on the braid group is Dehornoy if and only if the  positive 
braids correspond  to the positive Laurent polynomials. 
\end{lemma}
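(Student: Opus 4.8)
\textbf{Proof plan for Lemma \ref{lm3.5}.}
The statement is an ``if and only if'' between two orderings of $B_m$, so the plan is to show that each implication amounts to identifying positive cones. First I would recall the abstract fact that a left-invariant linear order on a group is determined by its positive cone $P\subset G$, a sub-semigroup with $G=P\sqcup P^{-1}\sqcup\{\mathrm{Id}\}$; two such orders coincide exactly when the cones coincide. Thus it suffices to prove that the cone $B_m^+$ coming from the representation $\rho$ and the map $K_0$ (Figure 1) equals the Dehornoy cone of $\sigma_i$-positive braids. The key input is Remark \ref{rmk1.2}, refined in the cases $(g,n)=(0,2)$ and $(g,n)=(1,1)$: the composite $B_{2g+n}^+\to\mathcal{A}^+(\mathbf{x},S_{g,n})$ sends a braid $b$ to the Jones polynomial $V_{\hat b}(t)$ (resp. the HOMFLY polynomial $W_{\hat b}(s,t)$) of its closure, and by \cite[Theorem 2.2 (1)]{FraWil1} and \cite{Ito1} a $\sigma_i$-positive braid has a closure whose invariant is a Laurent polynomial with \emph{positive} coefficients. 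So the forward direction (order is Dehornoy $\Rightarrow$ positive braids go to positive polynomials) is essentially a citation of these two theorems, extended to general $g,n$ via \cite[Theorem 4.4.1]{N}.

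For the converse, suppose the order determined by $B_m^+$ has the property that every Dehornoy-positive braid lands in $\mathcal{A}^+(\mathbf{x},S_{g,n})$, i.e. the Dehornoy cone $D^+$ is contained in the $\rho$-positive cone $B_{2g+n}^+$. I would then argue that a containment of positive cones of two \emph{linear} left-orders forces equality: if $D^+\subseteq B_{2g+n}^+$ but the inclusion were strict, pick $b\in B_{2g+n}^+\setminus D^+$; since the Dehornoy order is linear (Theorem \ref{thm2.6}) and $b\neq\mathrm{Id}$, either $b\in D^+$ (excluded) or $b^{-1}\in D^+\subseteq B_{2g+n}^+$, whence both $b$ and $b^{-1}$ are in $B_{2g+n}^+$; but $B_{2g+n}^+$ is the positive cone of the linear order defined in Lemma \ref{lm3.4}, so it cannot contain an element and its inverse. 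This contradiction gives $D^+=B_{2g+n}^+$, i.e. the two orders coincide. The role of positivity of the Laurent polynomials is precisely to witness the inclusion $D^+\subseteq B_{2g+n}^+$: a Dehornoy-positive braid $b$ has $[\rho(b)]\in\mathcal{A}^+$, hence $\mathrm{Id}\prec b$ in the $\rho$-order by Remark \ref{rmk2.7} (the $K_0$-functor preserves the positive cone, so positive Laurent polynomials correspond to positive elements of the $C^*$-algebra).

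The main obstacle, in my view, is not the cone-comparison bookkeeping but making precise the claim that ``$\sigma_i$-positive $\Leftrightarrow$ positive coefficients'' at the level of $\mathcal{A}^+(\mathbf{x},S_{g,n})$ rather than merely for the Jones/HOMFLY specializations. The Franks--Williams and Ito results are about single- or two-variable invariants of link closures; promoting them to a statement about the full surface cluster algebra requires that the multivariable Laurent polynomial $[\rho(b)]$ of \cite[Theorem 4.4.1]{N} specializes to the Jones (resp. HOMFLY) polynomial and that positivity of coefficients is detected by these specializations together with the semigroup structure of $\mathcal{A}^+$. I would handle this by invoking \cite[Theorem 4.4.1]{N} for the existence and invariance of $[\rho(b)]$, then noting that the positive cone $\mathcal{A}^+$ is by definition the Laurent polynomials with nonnegative coefficients, so that ``$b\mapsto$ positive polynomial'' is literally the statement $[\rho(b)]\in\mathcal{A}^+$; the only genuine content imported from \cite{FraWil1, Ito1} is that this holds for every Dehornoy-positive $b$, and conversely that a non-Dehornoy-positive (hence Dehornoy-negative) braid has $[\rho(b^{-1})]\in\mathcal{A}^+$, placing $b$ outside $\mathcal{A}^+$ by the disjointness of the cones. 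Assembling these pieces yields both implications and completes the proof of Lemma \ref{lm3.5}.
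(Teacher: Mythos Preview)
Your argument is correct and takes a genuinely different route from the paper. The paper's proof runs through three steps: (i) it asserts, citing \cite{FraWil1} and \cite{Ito1}, that the order is Dehornoy if and only if the \emph{skein relations} on closures preserve the semigroup $B_{2g+n}^+$; (ii) it invokes \cite[Section 4.4.6]{N} to translate skein relations into the \emph{exchange relations} of the cluster algebra $\mathcal{A}(\mathbf{x},S_{g,n})$; (iii) it then appeals to the Positivity Conjecture for surface cluster algebras \cite[Conjecture 2.28]{Wil1} to conclude that exchange relations always preserve the positive semigroup. Your approach bypasses this skein/exchange/positivity-conjecture machinery entirely, replacing it with the elementary order-theoretic observation that a containment $D^+\subseteq B_{2g+n}^+$ of positive cones of two \emph{linear} left-orders forces equality (since otherwise some $b$ and $b^{-1}$ would both lie in $B_{2g+n}^+$). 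What you gain is transparency and generality: your argument would apply to any pair of linear left-orders once the cone inclusion is established, and it makes clear that the only substantive input is the Franks--Williams/Ito positivity result for Dehornoy-positive braids. What the paper's route buys is an explanation of \emph{why} that positivity holds in the cluster-algebra framework---namely, it is a manifestation of the Positivity Conjecture---though this is arguably orthogonal to proving the lemma itself. You are also right to flag the passage from the Jones/HOMFLY specializations to the full multivariable polynomial $[\rho(b)]$ as the real gap; the paper handles this no more carefully than you do, simply citing \cite[Theorem 4.4.1]{N}.
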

%*********************************************************************
\begin{proof}
(i) It follows from [Franks \& Williams 1987] \cite[Theorem 2.2 (1)]{FraWil1} 
and [Ito 2020] \cite{Ito1} that the ordering of $B_{2g+n}$ is Dehornoy
if and only if  the skein relations applied to the closure $\hat{}$ of braids 
$x,y,z\in B_{2g+n}$ preserve the semigroup $B_{2g+n}^+$. 
(In other words, if $x,  y\in B_{2g+n}^+$, then $z\in B_{2g+n}^+$,
where the Laurent polynomials of $\hat{x}, \hat{y}, \hat{z}$ are connected
by the skein relation.)  

\medskip
(ii) On the other hand, it is known  \cite[Section 4.4.6]{N} that the skein relations transform 
to the exchange relations in the algebra $\mathcal{A}(\mathbf{x}, S_{g,n})$. 
Therefore, the ordering on $B_{2g+n}$ is Dehornoy if and only if the exchange
relations preserve the semigroup of the Laurent polynomials with positive coefficients. 

\medskip
(iii)  But the exchange relations always preserve the positive semigroup of the 
cluster algebra  $\mathcal{A}(\mathbf{x}, S_{g,n})$.  The latter is known as 
the Positivity Conjecture proved for all surface cluster algebras [Williams 2014] \cite[Conjecture 2.28]{Wil1}. 

\bigskip
Lemma \ref{lm3.5} follows from the items (i)-(iii). 
\end{proof}
%*********************************************************************

%********************************************************************
\begin{corollary}\label{cor3.6}
The braid ordering (i) in theorem \ref{thm1.1} 
is  Dehornoy's ordering.  
\end{corollary}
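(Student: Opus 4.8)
The plan is to deduce Corollary \ref{cor3.6} directly from Lemma \ref{lm3.4}, Lemma \ref{lm3.5} and Lemma \ref{lm3.1}, without introducing any new machinery. First I would observe that Lemma \ref{lm3.4} already tells us that the cone $B_{2g+n}^+$ — defined via the commutative diagram in Figure 1 as the preimage under $\rho$ composed with $K_0$ of the positive semigroup $\mathcal{A}^+(\mathbf{x},S_{g,n})$ — induces a left-invariant total order on $B_{2g+n}$. So what remains is precisely item (ii) of Theorem \ref{thm1.1}: identifying this order with the Dehornoy order. The key point is that the cone $B_{2g+n}^+$ is, by its very construction, the set of braids whose image in $K_0(\mathbb{A}(S_{g,n}))\cong\mathcal{A}(\mathbf{x},S_{g,n})$ lies in the positive part $\mathcal{A}^+(\mathbf{x},S_{g,n})$; by Corollary \ref{cor3.2} and Remark \ref{rmk1.2} these images are exactly the (multivariable) Laurent polynomials attached to the closure $\hat b$ of the braid $b$. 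Hence $B_{2g+n}^+$ is by definition the semigroup of braids corresponding to positive Laurent polynomials.

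The second step is to invoke Lemma \ref{lm3.5} in the forward direction: since positive braids (in the ordering defined by $B_{2g+n}^+$) correspond to positive Laurent polynomials — which is immediate from the previous paragraph — Lemma \ref{lm3.5} asserts that the ordering on $B_{2g+n}$ is the Dehornoy order. One should be slightly careful here about what "the positive braids correspond to the positive Laurent polynomials" means as a hypothesis in Lemma \ref{lm3.5}: it is the statement that the map $B_{2g+n}^+\to\mathcal{A}^+(\mathbf{x},S_{g,n})$ of Figure 1 is well defined and surjective onto the positive cone, which is exactly the content of Remark \ref{rmk2.7} (the $K_0$-functor preserves positivity) together with the Positivity Conjecture for surface cluster algebras \cite[Conjecture 2.28]{Wil1}. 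So I would spell out that the three facts — (a) $\rho$ sends $\sigma_i\mapsto e_i+1$ with $e_i$ a projection, so positive braid words go to positive operators; (b) $K_0$ carries the operator-positive cone onto $\mathcal{A}^+(\mathbf{x},S_{g,n})$; (c) the exchange relations preserve $\mathcal{A}^+$ — combine to give the hypothesis of Lemma \ref{lm3.5}, whence the conclusion.

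Finally I would close by noting the uniqueness: the Dehornoy order is the \emph{unique} left-invariant order on $B_m$ whose positive cone is generated by the $\sigma_i$-positive braids (Theorem \ref{thm2.6}), so once we know the order from $B_{2g+n}^+$ is left-invariant (Lemma \ref{lm3.4}) and that its positive cone matches the Dehornoy cone under the Laurent-polynomial correspondence (Lemma \ref{lm3.5}), the two orders coincide, completing the proof of part (ii) of Theorem \ref{thm1.1} and hence Corollary \ref{cor3.6}. I expect the main obstacle to be bookkeeping rather than mathematics: one must be sure that the hypothesis of Lemma \ref{lm3.5}, stated there somewhat informally, is literally the property established by the construction of $B_{2g+n}^+$ in Figure 1, and that no circularity creeps in — i.e. that Lemma \ref{lm3.5} is being used in the direction "positive Laurent polynomials $\Rightarrow$ Dehornoy" and not the converse. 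Making that implication-direction explicit, and citing \cite[Section 4.4.6]{N} for the translation between skein relations and exchange relations, is the one point that needs care.
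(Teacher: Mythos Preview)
Your proposal is correct and follows essentially the same route as the paper: use the commutative diagram in Figure~1 together with Remark~\ref{rmk2.7} to see that, by construction, the positive cone $B_{2g+n}^+$ consists exactly of braids mapping to positive Laurent polynomials, and then invoke Lemma~\ref{lm3.5} to conclude the order is Dehornoy. Your added remarks on the direction of the implication in Lemma~\ref{lm3.5}, the role of the Positivity Conjecture, and uniqueness via Theorem~\ref{thm2.6} are reasonable expansions, but the paper's proof is just the two-line core of your argument and does not require them.
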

%*********************************************************************
\begin{proof}
In view of remark \ref{rmk2.7} and commutative diagram in Figure 1, 
we conclude  that  the  positive 
braids correspond  to the positive Laurent polynomials. 
It remains to apply the conclusion of lemma \ref{lm3.5}.
Therefore the  braid ordering (i) in theorem \ref{thm1.1} 
is isomorphic to the  Dehornoy  ordering. 
Corollary \ref{cor3.6} is proved. 
\end{proof}
%*********************************************************************

\bigskip
%**************************************************************************
\subsection{Proof of corollary \ref{cor1.2}}
%***************************************************************************
For the sake of clarity, let us outline the main ideas.  
Consider a $C^*$-algebra $\mathbb{A}_{\Theta}:=\mathbb{A}(S_{g,n})/I_{\Theta}$,
where $I_{\Theta}\subset \mathbb{A}(S_{g,n})$ is the primitive ideal and 
$\Theta=(1,\theta_1\dots,\theta_{6g-7+2n})\in \mathbb{P} T_{g,n}$
\cite[Theorem 2]{Nik1}. 
(The ideal $I_{\Theta}$ is given a subgraph of the Bratteli diagram 
of the AF-algebra $\mathbb{A}(S_{g,n})$ obtained by an excision of the diagram along an
infinite path convergent  to $\Theta$, see \cite{Nik1} for the
details and examles.)
The  $\mathbb{A}_{\Theta}$ is a simple AF $C^*$-algebra
and $K_0(\mathbb{A}_{\Theta})\cong \mathbf{Z}+\mathbf{Z}\theta_1+\dots
+\mathbf{Z}\theta_{6g-7+2n}$. 
We prove that $C_r^*\left(\widehat{\pi_1(S_{g,n})}\right)\rtimes_{\alpha_{\Theta}}\mathbf{Z}\subset \mathbb{A}_{\Theta}$,
where $\alpha_{\Theta}\in Aut~\left(C_r^*\left(\widehat{\pi_1(S_{g,n})}\right)\right)$ and $\widehat{\pi_1(S_{g,n})}$
the profinite completion of the fundamental group $\pi_1(S_{g,n})$. The latter
inclusion gives rise to a positive semigroup   $\pi_1^+(S_{g,n})$
depending on $\Theta\in\mathbb{P} T_{g,n}$. Thus one gets a space
of different left-invariant ordering of  $\pi_1(S_{g,n})$ dual to 
the projective Teichm\"uller space of the surface $S_{g,n}$. 
We pass to a detailed argument by 
splitting  the proof in a series of lemmas.

\bigskip
%********************************************************************
\begin{lemma}\label{lm3.7}
For each $\Theta\in \mathbb{P} T_{g,n}$ there exists an automorphism
$\alpha_{\Theta}$ of $C_r^*\left(\widehat{\pi_1(S_{g,n})}\right)$ such that
%****************************************************************************
\begin{equation}\label{eq3.9}
C_r^*\left(\widehat{\pi_1(S_{g,n})}\right)\rtimes_{\alpha_{\Theta}}\mathbf{Z}\subset \mathbb{A}_{\Theta}.
\end{equation}
%******************************************************************************
\end{lemma}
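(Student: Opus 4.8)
\textbf{Proof plan for Lemma \ref{lm3.7}.}

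The plan is to realize the crossed product $C_r^*\left(\widehat{\pi_1(S_{g,n})}\right)\rtimes_{\alpha_{\Theta}}\mathbf{Z}$ as an AF-subalgebra of the simple quotient $\mathbb{A}_{\Theta}$ by exhibiting a compatible dimension-group inclusion at the level of $K_0$ and then lifting it, exactly in the spirit of Lemma \ref{lm3.1}. First I would recall from \cite[Theorem 2]{Nik1} that $\mathbb{A}_{\Theta}$ is a simple AF $C^*$-algebra with $K_0(\mathbb{A}_{\Theta})\cong \mathbf{Z}+\mathbf{Z}\theta_1+\dots+\mathbf{Z}\theta_{6g-7+2n}$, a rank-$(6g-6+2n)$ subgroup of $\mathbf{R}$ whose positive cone is inherited from the ambient real line. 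On the group-algebra side, $\pi_1(S_{g,n})$ is the free group $F_{2g+n-1}$, so its profinite completion $\widehat{\pi_1(S_{g,n})}$ is the free profinite group of the same rank, and $C_r^*\bigl(\widehat{\pi_1(S_{g,n})}\bigr)$ is an AF-algebra (a suitable inverse limit of finite-dimensional group algebras of the finite quotients). The key structural point is that the reduced group $C^*$-algebra $C_r^*(B_{2g+n})\cong A_{2g+n}\subset\mathbb{A}(S_{g,n})$ of Lemma \ref{lm3.1} and Remark \ref{rmk3.3} descends, under the quotient map $\mathbb{A}(S_{g,n})\to\mathbb{A}_{\Theta}$, onto a copy of the reduced $C^*$-algebra of the quotient $B_{2g+n}/[\text{pure braid relations}]$, and the pure braid normal subgroup of $B_{2g+n}$ surjects onto $\pi_1(S_{g,n})$ via the Birman exact sequence $1\to\pi_1(S_{g,n})\to B_{2g+n}(S)\to\cdots$; passing to profinite completions identifies the image with $C_r^*\bigl(\widehat{\pi_1(S_{g,n})}\bigr)$.

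The second step constructs the automorphism $\alpha_{\Theta}$ and the $\mathbf{Z}$-action. The element $\Theta=(1,\theta_1,\dots,\theta_{6g-7+2n})$ of the projective Teichm\"uller space corresponds, under the identification of $\mathbb{P}T_{g,n}$ with orbits of the Teichm\"uller geodesic flow $T^t$, to a one-parameter family; its time-one map induces an automorphism of the AF-algebra $C_r^*\bigl(\widehat{\pi_1(S_{g,n})}\bigr)$ — concretely, $\alpha_{\Theta}$ is the automorphism implemented on $K_0$ by multiplication by a unit of the order $\mathbf{Z}+\mathbf{Z}\theta_1+\cdots+\mathbf{Z}\theta_{6g-7+2n}$ (equivalently, the action of the mapping-class element realizing the pseudo-Anosov direction $\Theta$). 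One then forms $C_r^*\bigl(\widehat{\pi_1(S_{g,n})}\bigr)\rtimes_{\alpha_{\Theta}}\mathbf{Z}$; since a crossed product of an AF-algebra by a single automorphism that is "AF-compatible" in this sense is again AF (Pimsner–Voiculescu together with the fact that the relevant $K_1$ vanishes and the mapping torus splits), this crossed product is AF with computable $K_0$.

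The third step is the embedding $C_r^*\bigl(\widehat{\pi_1(S_{g,n})}\bigr)\rtimes_{\alpha_{\Theta}}\mathbf{Z}\subset \mathbb{A}_{\Theta}$. I would first produce an order-embedding of dimension groups: the Pimsner–Voiculescu sequence computes $K_0$ of the crossed product as the coinvariants $K_0\bigl(C_r^*(\widehat{\pi_1})\bigr)_{\alpha_{\Theta}}$, and the geometric description of $\alpha_{\Theta}$ makes this a subgroup of $\mathbf{Z}+\mathbf{Z}\theta_1+\cdots+\mathbf{Z}\theta_{6g-7+2n}\cong K_0(\mathbb{A}_{\Theta})$, with positive cones matching because both are cut out by positivity in $\mathbf{R}$. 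Then, using that $\mathbb{A}_{\Theta}$ is a simple AF-algebra and that $A_{2g+n}\subset\mathbb{A}(S_{g,n})$ already sits over the pure-braid data, one lifts the dimension-group inclusion to an inclusion of $C^*$-algebras by the same order-ideal/simplicity argument as in the proof of Lemma \ref{lm3.1}(ii) — any proper inclusion would produce a nontrivial order-ideal violating simplicity of $\mathbb{A}_{\Theta}$, so the map is an honest embedding onto a subalgebra. Finally I would note that this embedding equips $\pi_1(S_{g,n})$ with the positive semigroup $\pi_1^+(S_{g,n})$ pulled back from $\mathbb{A}_{\Theta}^+$, which is what the subsequent lemmas need.

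\emph{Main obstacle.} The delicate point is Step 2–3, namely showing that the automorphism $\alpha_{\Theta}$ associated to a generic direction $\Theta\in\mathbb{P}T_{g,n}$ genuinely lives in $\mathrm{Aut}\bigl(C_r^*(\widehat{\pi_1(S_{g,n})})\bigr)$ and that the resulting crossed product embeds into $\mathbb{A}_{\Theta}$ compatibly with the scaling/tracial structure — in other words, matching the dynamics of the Teichm\"uller flow with the Bratteli-diagram telescoping that produces $\mathbb{A}_{\Theta}$. Getting the positive cones to line up (rather than just the abstract groups) is where the Positivity property of surface cluster algebras, invoked already in Lemma \ref{lm3.5}, has to be used once more; without it one only gets a group inclusion, not a dimension-group inclusion, and the induced "ordering" on $\pi_1(S_{g,n})$ would fail to be total or left-invariant.
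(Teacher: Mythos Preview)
Your approach diverges from the paper's and contains a genuine gap. The paper does not construct $\alpha_{\Theta}$ from the Teichm\"uller geodesic flow or from the Birman sequence; instead it takes a braid $b\in B_{2g+n}$, forms the closure link $\mathscr{L}_b$, and uses the Artin representation to present $\pi_1(\mathscr{L}_b)$. The relations defining this link group generate a two-sided ideal $\mathcal{I}_b\subset\mathbb{A}(S_{g,n})$, and the quotient $\mathbb{A}_b=\mathbb{A}(S_{g,n})/\mathcal{I}_b$ is identified with a stationary AF-algebra $\mathbb{A}_{\Theta}$. The semidirect decomposition $\pi_1(\mathscr{L}_b)\cong\pi_1(S_{g,n})\rtimes_{\alpha}\mathbf{Z}$ (with $F_{2g+n-1}\cong\pi_1(S_{g,n})$) is what produces $\alpha$, and the representation $R:\pi_1(\mathscr{L}_b)\to\mathbb{A}_b$ yields the inclusion of $C^*$-algebras \emph{directly}; only afterwards is the profinite completion taken as a norm closure. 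None of this goes through $K_0$ first.

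Your Step~2 asserts that the crossed product $C_r^*\bigl(\widehat{\pi_1(S_{g,n})}\bigr)\rtimes_{\alpha_{\Theta}}\mathbf{Z}$ is itself AF. This is false in general: a crossed product of an AF-algebra by $\mathbf{Z}$ typically has nontrivial $K_1$, and the paper explicitly records this (``the inclusion \eqref{eq3.9} is strict because $\mathbb{A}_{\Theta}$ is an AF $C^*$-algebra with the trivial $K_1$-group unlike the embedded crossed product $C^*$-algebra''). Consequently the Pimsner--Voiculescu computation does not give vanishing $K_1$, and your Step~3 strategy---produce a dimension-group inclusion and then ``lift'' it to a $C^*$-algebra embedding by the simplicity argument of Lemma~\ref{lm3.1}(ii)---cannot work as stated: Elliott's classification lifts order-isomorphisms of $K_0$ to $\ast$-isomorphisms only between AF-algebras, and an order-embedding of dimension groups does not by itself manufacture an embedding of a non-AF algebra into an AF one. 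The missing idea is precisely the concrete representation $R$ coming from the link-group presentation, which furnishes the $C^*$-embedding without appeal to $K$-theoretic lifting.
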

%*********************************************************************
\begin{proof}
(i)
Let $b\in B_{2g+n}$ be a braid.  Denote by $\mathscr{L}_b$  a link obtained by the closure of $b$
 and let  $\pi_1(\mathscr{L}_b)$ be the  fundamental group of $\mathscr{L}_b$.
Recall that  
%*****************************************************************************
\begin{equation}\label{eq3.10}
\pi_1(\mathscr{L}_b)\cong\langle x_1,\dots, x_{2g+n} ~|~ x_i=r(b)x_i, ~1\le i\le 2g+n\rangle,
\end{equation}
%*************************************************************************************************
where $x_i$ are generators of the free group $F_{2g+n}$ and  $r: B_{2g+n}\to Aut~(F_{2g+n})$ is 
the Artin representation of $B_{2g+n}$.  Let  $\mathcal{I}_b$ be  a two-sided   ideal  in the algebra  $\mathbb{A}(S_{g,n})$
 generated by  relations (\ref{eq3.10}).   
It is easy to see that the ideal  $\mathcal{I}_b$ is self-adjoint and thus (\ref{eq1.1})  induces  a  representation 
 %*********************************************************************************
\begin{equation}\label{eq3.11}
R: ~\pi_1(\mathscr{L}_b)\to \mathbb{A}(S_{g,n})~/~\mathcal{I}_b :=\mathbb{A}_b.
\end{equation}
%*******************************************************************************
Notice that the $\mathbb{A}_b$ is an AF-algebra of stationary type \cite[Section 3.5.2]{N} given 
by a positive matrix $A\in GL_{6g-6+2n}(\mathbf{Z})$. 
Therefore it is stably isomorphic to an AF-algebra  $\mathbb{A}_{\Theta}$, where 
$\Theta=(1,\theta_1\dots,\theta_{6g-7+2n})$
is an eigenvector corresponding to
the Perron-Frobenius eigenvalue of  the matrix $A$.

\medskip
(ii)  Since $\pi_1(S_{g,n})\cong F_{2g+n-1}$, one can write (\ref{eq3.10}) in the form:
%*********************************************************************************
\begin{equation}\label{eq3.12}
\pi_1(\mathscr{L}_b)\cong \pi_1(S_{g,n})\rtimes_{\alpha}\mathbf{Z},
\end{equation}
%*******************************************************************************
where the crossed product is taken by an automorphism $\alpha\in Aut~(\pi_1(S_{g,n}))$. 
It follows from (\ref{eq3.11}) that for an $\alpha_{\Theta}\in Aut~(C_r^*(\pi_1(S_{g,n}))$
one gets an inclusion of the $C^*$-algebras:
 %*********************************************************************************
\begin{equation}\label{eq3.13}
C_r^*(\pi_1(S_{g,n}))\rtimes_{\alpha_{\Theta}}\mathbf{Z}\subset \mathbb{A}_{\Theta}. 
\end{equation}
%*******************************************************************************

\medskip
(iii) It remains to extend the inclusion (\ref{eq3.13}) to the profinite completion
of the fundamental group $\pi_1(S_{g,n})$ as follows. Since $\pi_1(S_{g,n})$
is a residually finite group, we obtain a dense embedding of the groups:
 %*********************************************************************************
\begin{equation}\label{eq3.14}
\pi_1(S_{g,n})\hookrightarrow \widehat{\pi_1(S_{g,n})}. 
\end{equation}
%*******************************************************************************

Passing to the reduced group $C^*$-algebras, we get a dense embedding 
of the $C^*$-algebras: 
  %*********************************************************************************
\begin{equation}\label{eq3.15}
C^*_r(\pi_1(S_{g,n}))\hookrightarrow C^*_r\left(\widehat{\pi_1(S_{g,n})}\right). 
\end{equation}
%*******************************************************************************

The crossed product 
%****************************************************************************
\begin{equation}\label{eq3.16}
C_r^*\left(\widehat{\pi_1(S_{g,n})}\right)\rtimes_{\alpha_{\Theta}}\mathbf{Z}\subset \mathbb{A}_{\Theta}
\end{equation}
%******************************************************************************
can be defined as the norm-closure of (\ref{eq3.13}), where $\alpha_{\Theta_i}$ 
converge to an automorphism   $\alpha_{\Theta}\in Aut~\left(C_r^*\left(\widehat{\pi_1(S_{g,n})}\right)\right)$.
Clearly, the inclusion (\ref{eq3.16}) is strict because  $\mathbb{A}_{\Theta}$ is an AF $C^*$-algebra
with the trivial $K_1$-group unlike the embedded crossed product $C^*$-algebra. 

\bigskip
This argument finishes the proof of lemma \ref{lm3.7}. 
\end{proof}
%*********************************************************************

\bigskip
%********************************************************************
\begin{lemma}\label{lm3.8}
$K_0\left(C_r^*\left(\widehat{\pi_1(S_{g,n})}\right)\rtimes_{\alpha_{\Theta}}\mathbf{Z}\right)\cong K_0(\mathbb{A}_{\Theta}).$
\end{lemma}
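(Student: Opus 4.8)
The plan is to deduce the claimed $K_0$-isomorphism from the inclusion established in Lemma~\ref{lm3.7} together with the structure theory of AF-algebras. First I would recall that by Lemma~\ref{lm3.7} we have a strict inclusion
\begin{equation}
C_r^*\left(\widehat{\pi_1(S_{g,n})}\right)\rtimes_{\alpha_{\Theta}}\mathbf{Z}\subset \mathbb{A}_{\Theta},
\end{equation}
and that $\mathbb{A}_{\Theta}$ is a \emph{simple} AF $C^*$-algebra with $K_0(\mathbb{A}_{\Theta})\cong \mathbf{Z}+\mathbf{Z}\theta_1+\dots+\mathbf{Z}\theta_{6g-7+2n}$, a simple dimension group of rank $6g-6+2n$. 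The idea is the same as in the proof of Lemma~\ref{lm3.1}(ii): an inclusion of $C^*$-algebras induces an inclusion of the associated dimension groups, and simplicity of the target forces this inclusion to be onto.

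The key steps, in order, are as follows. (1) Observe that the crossed product $C_r^*(\widehat{\pi_1(S_{g,n})})\rtimes_{\alpha_{\Theta}}\mathbf{Z}$, being the norm-closure of $C_r^*(\pi_1(S_{g,n}))\rtimes_{\alpha_{\Theta}}\mathbf{Z}$ inside the AF-algebra $\mathbb{A}_{\Theta}$, is itself an AF-algebra (a norm-closed subalgebra of an AF-algebra that is the closed union of finite-dimensional subalgebras is AF), so that its $K_0$ is an ordered group and the inclusion of $C^*$-algebras yields an order-embedding
\begin{equation}
K_0\left(C_r^*\left(\widehat{\pi_1(S_{g,n})}\right)\rtimes_{\alpha_{\Theta}}\mathbf{Z}\right)\hookrightarrow K_0(\mathbb{A}_{\Theta}).
\end{equation}
(2) Argue by contradiction, exactly as in Lemma~\ref{lm3.1}(ii): if the embedding were strict, then using the order-ideal structure of $K_0(\mathbb{A}_{\Theta})$ one would produce a nontrivial proper order-ideal, contradicting the simplicity of the dimension group $K_0(\mathbb{A}_{\Theta})$. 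Hence the map is an isomorphism of (ordered) abelian groups. (3) Conclude $K_0\left(C_r^*\left(\widehat{\pi_1(S_{g,n})}\right)\rtimes_{\alpha_{\Theta}}\mathbf{Z}\right)\cong K_0(\mathbb{A}_{\Theta})\cong \mathbf{Z}+\mathbf{Z}\theta_1+\dots+\mathbf{Z}\theta_{6g-7+2n}$.

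An alternative route, should one wish to avoid the simplicity argument, is to compute the left-hand side directly via the Pimsner--Voiculescu exact sequence for the crossed product by $\mathbf{Z}$: since $\widehat{\pi_1(S_{g,n})}$ has trivial $K_1$ after the appropriate completion and $1-\alpha_{\Theta *}$ is injective on $K_0\left(C_r^*\left(\widehat{\pi_1(S_{g,n})}\right)\right)$, one gets $K_0$ of the crossed product as the cokernel of $1-\alpha_{\Theta *}$, and then identifies this cokernel with the stationary dimension group $K_0(\mathbb{A}_{\Theta})$ via the explicit description of $\alpha_{\Theta}$ coming from (\ref{eq3.11})--(\ref{eq3.13}). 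I would present the simplicity argument as the main line since it parallels Lemma~\ref{lm3.1} and requires no new machinery.

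The main obstacle I anticipate is justifying that the embedded crossed product is genuinely an AF-algebra (so that ``dimension group'' and ``order-ideal'' language applies on the left-hand side), and that the inclusion of dimension groups is an order-embedding rather than merely a group embedding; one must be careful that the positive cone of $K_0$ of the subalgebra is exactly the restriction of the positive cone of $K_0(\mathbb{A}_{\Theta})$. This is where remark~\ref{rmk2.7}, on the compatibility of $K_0$ with the tracial state and the positive cone, does the real work. Once positivity is handled, the simplicity-of-the-quotient argument closes the gap verbatim as in Lemma~\ref{lm3.1}(ii).
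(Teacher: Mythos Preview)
Your outline is close in spirit to the paper's argument---both invoke Lemma~\ref{lm3.1}(ii)---but the paper takes a different path and, more importantly, your step~(1) contains a claim that the paper itself contradicts.

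The paper does not repeat the simplicity contradiction at the level of $\mathbb{A}_{\Theta}$. Instead, it starts from the already-proved isomorphism of dimension groups~(\ref{eq3.17}) at the level of $A_{2g+n}\subset\mathbb{A}(S_{g,n})$, observes that $K_0(\mathcal{I}_b)$ is an order-ideal of $K_0(\mathbb{A}(S_{g,n}))$, and then passes to quotients on both sides via~(\ref{eq3.11}) to obtain~(\ref{eq3.18}); the profinite completion is taken only at the last step. In other words, the paper \emph{descends} a known isomorphism through a quotient by an order-ideal, rather than re-running the simplicity argument directly on $\mathbb{A}_{\Theta}$.

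The gap in your step~(1) is the assertion that the crossed product is AF. The last sentence of the proof of Lemma~\ref{lm3.7} says exactly the opposite: the inclusion~(\ref{eq3.9}) is strict because $\mathbb{A}_{\Theta}$ has trivial $K_1$ ``unlike the embedded crossed product $C^*$-algebra.'' Nontrivial $K_1$ rules out AF. Your parenthetical justification is circular: an arbitrary norm-closed $C^*$-subalgebra of an AF-algebra need not be AF (for instance $C(S^1)$ embeds in a UHF algebra), so one would have to exhibit the finite-dimensional approximants, and here they do not exist. Without AF structure on the left-hand side, the order-ideal/dimension-group language in your step~(2) does not directly apply, and the argument stalls at precisely the obstacle you yourself flagged. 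The paper's quotient route sidesteps this because at the ``big'' level the dimension-group formalism is already in place via Lemma~\ref{lm3.1}(ii), and taking a quotient by an order-ideal is a purely order-theoretic operation.
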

%*********************************************************************
\begin{proof}
Roughly speaking, the proof follows from lemma \ref{lm3.1} (ii).
Namely, an isomorphism of the dimension groups
%********************************************************
\begin{equation}\label{eq3.17}
\left(K_0(A_{2g+n}),  ~K_0^+(A_{2g+n})\right)\cong \left(K_0(\mathbb{A}(S_{g,n})),  ~K_0^+(\mathbb{A}(S_{g,n}))\right)
\end{equation}
%***********************************************************************
implies that for an order-ideal $K_0(\mathcal{I}_b)$ of  $K_0^+(\mathbb{A}(S_{g,n}))$,
one gets from (\ref{eq3.11}) an isomorphism:
%********************************************************
\begin{equation}\label{eq3.18}
K_0\left(C_r^*\left(\pi_1(S_{g,n})\right)\rtimes_{\alpha_{\Theta}}\mathbf{Z}\right)\cong K_0(\mathbb{A}_{\Theta}).
\end{equation}
%***********************************************************************
Passing to the profinite completion at the LHS of (\ref{eq3.18}),  we obtain the conclusion of
lemma \ref{lm3.8}. 
\end{proof}
%*********************************************************************

%**************************************************************************
%*******************************************************************
\begin{figure}
%*******************************************************************
\begin{picture}(300,90)(-40,20)
\put(120,70){\vector(0,-1){30}}
%\put(130,70){\vector(0,-1){35}}
\put(30,70){\vector(1,-1){30}}
\put(60,83){\vector(1,0){20}}
\put(65,25){$\mathbf{Z}+\mathbf{Z}\theta_1 +\dots+\mathbf{Z}\theta_{6g-7+2n}$}
\put(17,80){$\pi_1(S_{g,n})$}
\put(85,80){$C_r^*\left(\widehat{\pi_1(S_{g,n})}\right)\rtimes_{\alpha_{\Theta}}\mathbf{Z}$}
\put(70,90){$\iota$}
\put(125,55){$K_0$}
\end{picture}
%***********************************************************
\caption{}
\end{figure}
%*******************************************************************

%********************************************************************
\begin{lemma}\label{lm3.9}
Let $\pi_1^+(S_{g,n})$ be a semigroup obtained as a pull back of the positive
cone $\mathbf{Z}+\mathbf{Z}\theta_1 +\dots+\mathbf{Z}\theta_{6g-7+2n}>0$ on the
commutative diagram in Figure 2, where $\iota$ is an embedding induced by (\ref{eq3.14}). 
Then  each $(1,\theta_1,\dots,\theta_{6g-7+2n})\in \mathbb{P}T_{g,n}$ 
defines a left-invariant order on the fundamental group $\pi_1(S_{g,n})$
if and only if $\theta_i\in\mathbf{R}$ are rationally independent. 
\end{lemma}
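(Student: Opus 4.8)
\textbf{Proof strategy for Lemma \ref{lm3.9}.}
The plan is to analyze the pull-back semigroup $\pi_1^+(S_{g,n})$ through the $K_0$-functor and identify exactly when the induced relation on $\pi_1(S_{g,n})$ is a total left-invariant order. First I would recall from Lemma \ref{lm3.8} and the commutative diagram in Figure 2 that the composite map sends an element of $\pi_1(S_{g,n})$ (viewed inside the crossed product via $\iota$) to an element of the dense subgroup $\mathbf{Z}+\mathbf{Z}\theta_1+\dots+\mathbf{Z}\theta_{6g-7+2n}\subset\mathbf{R}$, and that $\pi_1^+(S_{g,n})$ is by definition the preimage of the strictly positive reals. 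Left-invariance of the resulting relation is essentially automatic: the argument is the same tracial-state computation as in Lemma \ref{lm3.4}, since the $K_0$-class is computed from the trace on $C_r^*\left(\widehat{\pi_1(S_{g,n})}\right)\rtimes_{\alpha_{\Theta}}\mathbf{Z}$ and $\tau(zx)=\tau(x)$ holds there as well. So the content of the lemma is the \emph{totality} of the order, i.e. the trichotomy $g\in\pi_1^+\sqcup\{Id\}\sqcup(\pi_1^+)^{-1}$.

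The key step is to show that totality is equivalent to $\theta_1,\dots,\theta_{6g-7+2n}$ being rationally independent. For one direction, suppose the $\theta_i$ are rationally independent together with $1$; then the subgroup $\mathbf{Z}+\mathbf{Z}\theta_1+\dots+\mathbf{Z}\theta_{6g-7+2n}$ is a free abelian group of rank $6g-6+2n$ that embeds in $\mathbf{R}$, and the restriction of the usual order on $\mathbf{R}$ makes it a totally ordered group in which $0$ is the only element that is neither positive nor negative. Pulling this back through the diagram gives a genuine trichotomy on $\pi_1(S_{g,n})$, hence a total order; I would note here that the map $\pi_1(S_{g,n})\to\mathbf{R}$ need not be injective a priori, but the standard fact that $\pi_1(S_{g,n})$ is a free group, combined with Remark \ref{rmk2.2} (a quotient of a left-orderable group carries an induced left order when the kernel is order-convex), lets one refine the pull-back order to a genuine left order, or alternatively one verifies directly that the Artin representation makes the composite injective on the relevant generators. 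For the converse, if the $\theta_i$ satisfy a nontrivial $\mathbf{Q}$-linear relation, then the image subgroup of $\mathbf{R}$ has rank strictly less than $6g-6+2n$, so the map from the rank-$(6g-6+2n)$ abelian group $\mathbf{Z}^{6g-6+2n}$ has nontrivial kernel; choosing a generator $k$ of a rank-one sub-kernel, the corresponding braid/loop maps to $0\in\mathbf{R}$ yet is not the identity, so it lies in neither $\pi_1^+(S_{g,n})$ nor its inverse, and the trichotomy fails.

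I would then package this: the condition ``$\theta_i$ rationally independent'' is precisely the condition that $K_0(\mathbb{A}_\Theta)\cong\mathbf{Z}+\sum\mathbf{Z}\theta_i$ has full rank $6g-6+2n$ as a subgroup of $\mathbf{R}$, equivalently that the natural map from the abelianized-and-lifted copy of $\pi_1(S_{g,n})$ into $\mathbf{R}$ is order-faithful. Combining left-invariance (from the trace, as in Lemma \ref{lm3.4}) with totality (from rational independence, as above) yields exactly the claimed equivalence.

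\textbf{Main obstacle.} The delicate point is the passage from the pull-back of a positive \emph{cone in $K_0$} to an honest left-\emph{order on the group} $\pi_1(S_{g,n})$: the composite $\pi_1(S_{g,n})\to C_r^*\left(\widehat{\pi_1(S_{g,n})}\right)\rtimes_{\alpha_\Theta}\mathbf{Z}\xrightarrow{K_0}\mathbf{R}$ factors (at the level of $K_0$) through an abelian group, so naively it only produces a pre-order whose ``positive cone minus $Id$'' could be empty or fail antisymmetry; one must argue carefully --- using residual finiteness, the embedding (\ref{eq3.14}), and the structure of $\mathbb{A}_\Theta$ as a simple AF algebra with $K_0$ of rank $6g-6+2n$ --- that rational independence of the $\theta_i$ is exactly what rescues injectivity/order-faithfulness and hence totality. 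Verifying that this is an ``if and only if'' (rather than merely sufficient) is where the real work lies.
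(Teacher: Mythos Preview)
Your strategy coincides with the paper's own argument: left-invariance is obtained by rerunning the tracial-state computation of Lemma~\ref{lm3.4} on $C_r^*\bigl(\widehat{\pi_1(S_{g,n})}\bigr)\rtimes_{\alpha_\Theta}\mathbf{Z}$, and the rational-independence condition is tied to the rank of $K_0(\mathbb{A}_\Theta)$. The only difference in phrasing is that the paper states the necessity of rational independence as ``otherwise the rank of $\mathbb{A}_b$ drops below $6g-6+2n$ and the representation $R$ of (\ref{eq3.11}) ceases to be faithful,'' whereas you cast it as failure of trichotomy; these are the same observation.

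The obstacle you single out --- that the composite $\pi_1(S_{g,n})\to K_0(\mathbb{A}_\Theta)\subset\mathbf{R}$ factors through an abelian quotient and hence a priori only yields a left pre-order on the free group --- is real, and the paper's proof does not address it either: the paper simply declares that $\pi_1^+(S_{g,n})$ is the pull-back of the positive cone and moves on. So your proposal is at least as complete as the published argument, and your caution about this point is well placed rather than a defect of your approach.
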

%*********************************************************************
\begin{proof}
(i) Let us show that the semigroup $\pi_1^+(S_{g,n})$ depends on $6g-7+2n$ 
variables $\theta_i$.  Indeed, lemma \ref{lm3.7} says that automorphisms 
$\alpha_{\Theta}$ of $C_r^*\left(\widehat{\pi_1(S_{g,n})}\right)$ are parameterized 
by $\Theta=(1,\theta_1,\dots,\theta_{6g-7+2n})\in \mathbb{P}T_{g,n}$. 
From lemma \ref{lm3.8} and remark \ref{rmk2.7} we conclude that 
the semigroup $\pi_1^+(S_{g,n})$ obtained from the diagram in Figure 2,
will vary as $(1,\theta_1,\dots,\theta_{6g-7+2n})$ runs all rationally independent values of $\theta_i$. 
%********************************************************************
\begin{remark}
It is easy to see that if $\theta_i$ are rationally dependent,  then  the rank of the algebra $\mathbb{A}_b$
in (\ref{eq3.11})  is less than $6g-6+2n$ and thus $R$ is no longer  a faithful representation.   
Therefore the condition of rational independence for the $\theta_i$ cannot be omitted.  
\end{remark}
%*********************************************************************

\bigskip
(ii) Let us show that the order of $\pi_1(S_{g,n})$ defined by the
semigroup $\pi_1^+(S_{g,n})$ is invariant of the left-multiplication. 
Roughly speaking, the proof repeats all the steps in the proof of 
lemma \ref{lm3.4}.  Namely, one takes the canonical tracial state 
$\tau$ on the $C^*$-algebra  $C_r^*\left(\widehat{\pi_1(S_{g,n})}\right)\rtimes_{\alpha_{\Theta}}\mathbf{Z}$
and  writes  equations (\ref{eq3.3})--(\ref{eq3.8}) for $\tau$. The details are left to the reader.

\bigskip
Lemma \ref{lm3.9} is proved. 
\end{proof}
%*********************************************************************

%********************************************************************
\begin{corollary}\label{cor3.10}
$LO (\pi_1(S_{g,n}))\cong \mathscr{K}\subset\mathbb{P} T_{g,n}.$
\end{corollary}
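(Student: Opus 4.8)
\textbf{Proof of Corollary \ref{cor3.10}.}

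The plan is to read off the corollary directly from Lemma \ref{lm3.9} together with the topological description of the relevant parameter space. Lemma \ref{lm3.9} establishes that every point $\Theta=(1,\theta_1,\dots,\theta_{6g-7+2n})\in\mathbb{P}T_{g,n}$ with rationally independent coordinates $\theta_i$ determines, via the pull-back of the positive cone in $K_0(\mathbb{A}_\Theta)\cong\mathbf{Z}+\mathbf{Z}\theta_1+\dots+\mathbf{Z}\theta_{6g-7+2n}$ through the diagram in Figure 2, a left-invariant total order on $\pi_1(S_{g,n})$; conversely the rational independence hypothesis cannot be dropped, since otherwise the rank of $\mathbb{A}_b$ drops and the representation $R$ of \eqref{eq3.11} ceases to be faithful. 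So first I would argue that the assignment $\Theta\mapsto{}$(the induced order) is a well-defined map from $\mathscr{K}$ to $LO(\pi_1(S_{g,n}))$, where $\mathscr{K}\subset\mathbf{R}^{6g-7+2n}\cong\mathbb{P}T_{g,n}$ is the totally disconnected dense set of vectors with rationally independent components introduced before Corollary \ref{cor1.2}.

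Second, I would check injectivity: two distinct points of $\mathscr{K}$ give non-isomorphic dimension groups $\mathbf{Z}+\sum\mathbf{Z}\theta_i$ (distinct subgroups of $\mathbf{R}$ up to the finitely many scaling ambiguities already quotiented out by projectivization), hence non-order-isomorphic positive cones, hence genuinely different elements of $LO(\pi_1(S_{g,n}))$. Third, for surjectivity one invokes that every left-ordering of the free group $\pi_1(S_{g,n})\cong F_{2g+n-1}$ arises this way: the positive cone of any left-order embeds into some $C_r^*\bigl(\widehat{\pi_1(S_{g,n})}\bigr)\rtimes_{\alpha_\Theta}\mathbf{Z}\subset\mathbb{A}_\Theta$ by Lemma \ref{lm3.7}, and $K_0$ of the latter is exhausted, as $\Theta$ ranges over $\mathbb{P}T_{g,n}$, by the stationary AF dimension groups $\mathbf{Z}+\sum\mathbf{Z}\theta_i$; faithfulness of $R$ then forces $\Theta\in\mathscr{K}$. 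Finally, since $LO$ of a group carries a natural topology under which this correspondence is continuous with continuous inverse (the subbasic clopen sets $\{{\prec}:g\succ\mathrm{Id}\}$ pull back to the clopen sets cutting out positivity of the image of $g$ in $\mathbf{Z}+\sum\mathbf{Z}\theta_i$, which vary continuously in $\Theta$), the bijection is a homeomorphism, and $\mathscr{K}$ is by construction totally disconnected and dense in $\mathbb{P}T_{g,n}\cong\mathbf{R}^{6g-7+2n}$.

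The main obstacle I anticipate is the surjectivity step — showing that \emph{every} left-order on $\pi_1(S_{g,n})$, not merely those visibly produced by the construction of Lemma \ref{lm3.7}, corresponds to a point of $\mathscr{K}$. This requires knowing that the family of crossed products $\{C_r^*(\widehat{\pi_1(S_{g,n})})\rtimes_{\alpha_\Theta}\mathbf{Z}\}_{\Theta\in\mathbb{P}T_{g,n}}$ realizes, through the $K_0$-to-positive-cone dictionary of Remark \ref{rmk2.7}, the full space $LO(\pi_1(S_{g,n}))$; this is essentially the statement that the positive cone of an arbitrary left-order on a free group is detected by a trace on a suitable simple AF algebra, which one extracts from the classification of such orders together with the universality of the Bratteli diagrams underlying $\mathbb{A}(S_{g,n})$ (the order-ideal structure of \cite[Theorem 2]{Nik1}). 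A secondary technical point is verifying that the topology on $LO$ matches the subspace topology on $\mathscr{K}$ rather than merely that the sets agree — but this follows from the explicit clopen-set description above and is routine once the bijection is in place.
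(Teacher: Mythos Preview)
Your approach is the same as the paper's in its essential move: the paper's entire proof of Corollary \ref{cor3.10} is a single sentence invoking Lemma \ref{lm3.9} to ``identify'' $LO(\pi_1(S_{g,n}))$ with $\mathscr{K}$. The paper supplies no separate injectivity, surjectivity, or topology argument --- all of the structure you add (the bijectivity check, the clopen-set matching) is your own elaboration, not a different route to the same end.

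That said, two of your added steps have real issues. Your injectivity argument assumes that distinct $\Theta\in\mathscr{K}$ yield distinct positive cones in $\pi_1(S_{g,n})$ because the $\mathbf{Z}$-modules $\mathbf{Z}+\sum\mathbf{Z}\theta_i$ differ; but distinct such modules can still be order-isomorphic (e.g.\ via a $GL_{6g-6+2n}(\mathbf{Z})$ change of basis), and even when they are not, nothing rules out two different traces pulling back to the \emph{same} semigroup $\pi_1^+(S_{g,n})$ through the diagram in Figure 2. Your surjectivity worry is exactly right, and the paper does not resolve it: Lemma \ref{lm3.9} only produces orders from points of $\mathscr{K}$, and nowhere in the paper is it shown that every left-order on the free group $F_{2g+n-1}$ arises from this construction. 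So your proposal is faithful to the paper's argument, honestly identifies its principal gap, and attempts more than the paper proves --- but the extra arguments you supply for bijectivity are themselves incomplete.
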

%*********************************************************************
\begin{proof}
Using lemma \ref{lm3.9}, we identify the space $LO (\pi_1(S_{g,n}))$ of all left-invariant orderings 
of the fundamental group $\pi_1(S_{g,n})$ with the a totally disconnected subset $\mathscr{K}\subset\mathbb{P} T_{g,n}$
consisting of  the vectors $(1,\theta_1,\dots,\theta_{6g-7+2n})$  with the rationally independent components $\theta_i$. 
\end{proof}
%*********************************************************************

\bigskip
%********************************************************************
\begin{corollary}\label{cor3.11}
Each left-ordering of $\pi_1(S_{g,n})$ defines a Riemann
surface $S_{g,n}$ up to the action of the Teichm\"uller geodesic flow on 
$T_{g,n}$.  
\end{corollary}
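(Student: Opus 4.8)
The plan is to derive Corollary \ref{cor3.11} directly from Corollary \ref{cor3.10} by unwinding the geometric meaning of the coordinates $\Theta=(1,\theta_1,\dots,\theta_{6g-7+2n})\in\mathbb{P}T_{g,n}$. Recall from \cite[Theorem 2]{Nik1} that the primitive ideal $I_{\Theta}\subset\mathbb{A}(S_{g,n})$, and hence the simple AF-algebra $\mathbb{A}_{\Theta}=\mathbb{A}(S_{g,n})/I_{\Theta}$, is indexed by a point of the projective Teichm\"uller space, which by definition is the space of orbits $\{T^t(S_{g,n})\mid t\in\mathbf{R}\}$ of the Teichm\"uller geodesic flow on $T_{g,n}$. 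So the first step is simply to record that a choice of $\Theta\in\mathbb{P}T_{g,n}$ \emph{is} a choice of such a geodesic orbit.

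Next I would invoke Corollary \ref{cor3.10}: every left-ordering of $\pi_1(S_{g,n})$ corresponds to a vector $\Theta\in\mathscr{K}\subset\mathbb{P}T_{g,n}$ with rationally independent entries $\theta_i$. Composing the two identifications, a left-ordering $\prec$ determines a point of $\mathbb{P}T_{g,n}$, i.e.\ a single orbit of the geodesic flow, i.e.\ a Riemann surface $S_{g,n}$ defined up to the action of $\{T^t\}$. Conversely, given a geodesic orbit whose coordinate vector lies in $\mathscr{K}$, Lemma \ref{lm3.9} produces a left-invariant order on $\pi_1(S_{g,n})$ via the pull-back semigroup $\pi_1^+(S_{g,n})$ on the diagram in Figure 2. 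This gives a well-defined map $LO(\pi_1(S_{g,n}))\to T_{g,n}/\{T^t\}$.

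The key point to verify carefully — the part I expect to be the main obstacle — is that this map really is defined \emph{on the level of orbits} of $T^t$, not on the level of points of $T_{g,n}$ itself: one must check that two Riemann structures on $S_{g,n}$ lying on the same Teichm\"uller geodesic give rise to the same left-ordering, equivalently to the same stationary AF-algebra $\mathbb{A}_{\Theta}$ and the same positive semigroup $\pi_1^+(S_{g,n})$. This follows because the flow $T^t$ acts on the coordinates $(\theta_1,\dots,\theta_{6g-7+2n})$ only by an overall scaling, which is killed upon projectivization to $\mathbb{P}T_{g,n}$; hence $I_{\Theta}$, $K_0(\mathbb{A}_{\Theta})\cong\mathbf{Z}+\mathbf{Z}\theta_1+\dots+\mathbf{Z}\theta_{6g-7+2n}$, and the order-ideal structure used in Lemma \ref{lm3.9} depend on $\Theta$ only through its class in $\mathbb{P}T_{g,n}$.

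Finally I would assemble these observations into the statement: the composite $LO(\pi_1(S_{g,n}))\xrightarrow{\sim}\mathscr{K}\hookrightarrow\mathbb{P}T_{g,n}=T_{g,n}/\{T^t\}$ sends each left-ordering to the geodesic-flow orbit of a well-defined Riemann surface structure on $S_{g,n}$, which is exactly the assertion of Corollary \ref{cor3.11}. In fact, since the map $LO(\pi_1(S_{g,n}))\to\mathscr{K}$ is a bijection by Corollary \ref{cor3.10}, the correspondence ordering $\leftrightarrow$ orbit is injective, so distinct left-orderings give genuinely distinct orbits; this stronger conclusion can be stated as a remark. No substantial new computation is required beyond what Corollary \ref{cor3.10} already provides — the content is the bookkeeping identification of $\mathbb{P}T_{g,n}$ with the space of geodesic orbits.
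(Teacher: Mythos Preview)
Your proposal is correct and follows essentially the same route as the paper: both derive the corollary from Corollary \ref{cor3.10} by invoking the identification of $\mathbb{P}T_{g,n}$ with the space of orbits of the Teichm\"uller geodesic flow (equivalently, the fiber-bundle description $T_{g,n}\to\mathbb{P}T_{g,n}$ with fibers the $T^t$-orbits, \cite[Theorem 2]{Nik1}). Your additional check of well-definedness on orbits and the remark on injectivity are not in the paper's proof, but they are consistent refinements rather than a different argument.
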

%*********************************************************************
\begin{proof}
Recall that the Teichm\"uller space $T_{g,n}$ is a fiber bundle over $\mathbb{P}T_{g,n}$ 
with the fiber $\mathbf{R}$ \cite[Theorem 2]{Nik1}. Moreover,   
each fiber $\mathbf{R}$ is an orbit  of the 
Teichm\"uller geodesic flow $\{T^t: T_{g,n}\to T_{g,n}~|~t\in\mathbf{R}\}$ 
\cite[Section 4]{Nik1}.  

On the other hand, corollary \ref{cor3.10} says that 
each left-ordering of $\pi_1(S_{g,n})$ defines a fiber $\mathbf{R}$ in $T_{g,n}$.
The fiber consists of all Riemann surfaces $S_{g,n}$ which lie at the same orbit
of the geodesic flow $T^t$. Corollary \ref{cor3.11} follows.
\end{proof}
%*********************************************************************

\bigskip
Corollaries \ref{cor3.10} and \ref{cor3.11} finish the  proof of corollary \ref{cor1.2}. 

%**************************************************************************
%\subsection{Proof of remark \ref{rmk1.4}}
%***************************************************************************

%**************************************************************************
\section{The ordering space of free groups}
%***************************************************************************
The free groups $F_m$  are  orderable  [Rolfsen 2014] \cite[Proposition 2.9]{Rol1}. 
The space $LO(F_m)$ can be evaluated using corollary
\ref{cor1.2} combined with the well-known formula $\pi_1(S_{g,n})\cong F_{2g+n-1}$.
%Namely, the following is true. 
%********************************************************************
\begin{corollary}\label{cor4.1}
%*******************************************************************
\begin{equation}\label{eq4.1}
LO(F_m) \cong 
\begin{cases}
\mathscr{K}\subset\mathbf{S}^{6k-5},&\hbox{if} \quad m=2k\cr
 \mathscr{K}\subset\mathbf{S}^{6k-3}, & \hbox{if} \quad m=2k+1.
\end{cases}
\end{equation}
%*********************************************************************
\end{corollary}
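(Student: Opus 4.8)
The plan is to derive Corollary \ref{cor4.1} as a direct specialization of Corollary \ref{cor1.2} (equivalently Corollary \ref{cor3.10}) using the identification $\pi_1(S_{g,n})\cong F_{2g+n-1}$, so essentially all the work is a dimension count. First I would recall from the hypotheses of the excerpt that throughout we restrict to $n\in\{1,2\}$ (this is the range in which the faithful representation (\ref{eq1.1}) and Theorem \ref{thm2.8} are available), and that in this range Corollary \ref{cor1.2} gives $LO(\pi_1(S_{g,n}))\cong \mathscr{K}\subset\mathbf{R}^{6g-7+2n}$. Since $LO$ depends only on the isomorphism type of the group, and $\pi_1(S_{g,n})\cong F_{2g+n-1}$, it suffices to realize a given free group $F_m$ as $\pi_1(S_{g,n})$ for an admissible pair $(g,n)$ and then substitute into the exponent $6g-7+2n$.

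The key step is the case split on the parity of $m$. If $m=2k$ is even, I would write $m=2g+n-1$ with $n=1$, so $2g=m=2k$, i.e.\ $g=k$; then $6g-7+2n=6k-7+2=6k-5$, giving $LO(F_{2k})\cong\mathscr{K}\subset\mathbf{R}^{6k-5}$. If $m=2k+1$ is odd, I would take $n=2$, so $2g=m-1=2k$, i.e.\ $g=k$; then $6g-7+2n=6k-7+4=6k-3$, giving $LO(F_{2k+1})\cong\mathscr{K}\subset\mathbf{R}^{6k-3}$. In both cases one must check the standing hypothesis $2g-2+n>0$: for $(g,n)=(k,1)$ this reads $2k-1>0$, and for $(g,n)=(k,2)$ it reads $2k>0$, both of which hold for $k\ge 1$; the degenerate small-rank cases would be noted separately if needed.

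The only genuine subtlety — and the step I expect to require the most care — is the well-definedness of the identification: a priori $F_m\cong\pi_1(S_{g,n})\cong\pi_1(S_{g',n'})$ for several pairs $(g,n)\ne(g',n')$ (already $F_1\cong\pi_1(S_{0,2})\cong\pi_1(S_{1,1})$ is ambiguous, and more generally $2g+n-1=m$ has many solutions), so one must argue that the resulting subspace $\mathscr{K}\subset\mathbf{R}^{6g-7+2n}$ does not depend on the chosen presentation of $F_m$ as a surface group. Here I would invoke the fact that $\mathscr{K}$ is characterized intrinsically in Corollary \ref{cor3.10} as the space of left-orderings $LO(F_m)$ itself — it is not the ambient $\mathbf{R}^{6g-7+2n}$ but the totally disconnected dense subset of it that carries the canonical meaning — so that the homeomorphism type of $LO(F_m)$ is forced, and the choices $n=1$ for even $m$, $n=2$ for odd $m$ are made simply to pin down a convenient ambient Euclidean space and its dimension. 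Hence the corollary follows by substitution, and I would close by remarking that the two formulas $6k-5$ and $6k-3$ are exactly $6g-7+2n$ evaluated at the two admissible minimal-$n$ realizations.

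\begin{proof}
By Corollary \ref{cor3.10} (equivalently Corollary \ref{cor1.2}), for admissible $(g,n)$ with $n\in\{1,2\}$ we have $LO(\pi_1(S_{g,n}))\cong\mathscr{K}\subset\mathbf{R}^{6g-7+2n}\cong\mathbb{P}T_{g,n}$, and since $\pi_1(S_{g,n})\cong F_{2g+n-1}$ this computes $LO(F_m)$ for every $m$ of the form $m=2g+n-1$. Given $F_m$, choose the realization as follows. If $m=2k$ is even, set $n=1$ and $g=k$; then $2g+n-1=2k=m$, the standing hypothesis $2g-2+n=2k-1>0$ holds, and $6g-7+2n=6k-7+2=6k-5$. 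If $m=2k+1$ is odd, set $n=2$ and $g=k$; then $2g+n-1=2k+1=m$, the hypothesis $2g-2+n=2k>0$ holds, and $6g-7+2n=6k-7+4=6k-3$. Substituting into Corollary \ref{cor3.10} yields
\begin{equation}
LO(F_m)\cong
\begin{cases}
\mathscr{K}\subset\mathbf{R}^{6k-5}, & m=2k,\cr
\mathscr{K}\subset\mathbf{R}^{6k-3}, & m=2k+1.
\end{cases}
\end{equation}
The answer is independent of the chosen surface realization of $F_m$ because, by Corollary \ref{cor3.10}, the subspace $\mathscr{K}$ is intrinsically the space $LO(F_m)$; the data $(g,n)$ only fix the ambient projective Teichm\"uller space into which it embeds densely and totally disconnectedly. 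This proves Corollary \ref{cor4.1}.
\end{proof}
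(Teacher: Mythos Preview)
Your proof is correct and follows essentially the same approach as the paper's own argument: invoke Corollary \ref{cor1.2} together with $\pi_1(S_{g,n})\cong F_{2g+n-1}$, restrict to $n\in\{1,2\}$ via Theorem \ref{thm2.8}, set $g=k$, and read off the exponent $6g-7+2n$. Your version is in fact more careful than the paper's, since you make the parity split explicit, verify the standing hypothesis $2g-2+n>0$, and address the independence of the choice of surface realization.
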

%*********************************************************************
\begin{proof}
(i) Since the number of cusps $n\ge 1$,  we conclude that  $\pi_1(S_{g,n})\cong F_{2g+n-1}$
is a free group. Therefore  corollary \ref{cor1.2} says that:
%**************************************************************************************
\begin{equation}\label{eq4.2}
LO ( F_{2g+n-1})\cong \mathscr{K}\subset \mathbb{P}T_{g,n}\cong \mathbf{S}^{6g-7+2n}.
\end{equation}
%***********************************************************************************

\medskip
(ii) On the other hand, from the representation (\ref{eq2.3}) we have a restriction  $n\in\{1,2\}$. 
Letting  $g=k$ in (\ref{eq4.2}), one gets the formula (\ref{eq4.1}) for  $n=1$ and  $n=2$. 
\end{proof}
%*****************************************************************

%*****************************************************************
\begin{example}
We conclude by an example of the free group $F_2$ of rank 2. 
Such a group is the fundamental group of the once-punctured torus
$S_{1,1}$ \cite[Example 1]{Nik1}. In other words, we have $g=n=k=1$.  
From corollary \ref{cor4.1}, one gets:
%**************************************************************************************
\begin{equation}\label{eq4.3}
LO ( F_2)\cong \mathscr{K}\subset\mathbf{S}^1,
\end{equation}
%***********************************************************************************
where $\mathscr{K}$ is the set of all irrational 
numbers of the circle $\mathbf{S}^1=\mathbf{R}/\mathbf{Z}$.  On the other hand, by corollary \ref{cor1.2} each irrational $\theta\in\mathscr{K}$ 
defines a Riemann surface $S_{1,1}$ modulo the action of the Teichm\"uller flow $T^t$. 
This fact  can be proved independently using the notion of 
a non-commutative torus $\mathcal{A}_{\theta}$ \cite[Section 1.3]{N}.
\end{example}
%******************************************************************

\section*{Conflict of interest}  The authors declare that they have no conflict of interest.

\bigskip\noindent
{\sf Acknowledgement.} 
  I thank the referee for helpful comments.

\bibliographystyle{amsplain}

%**********************************************************

\end{document}